\numberwithin{equation}{section}
\newtheorem{lemma}[equation]{Lemma}
\theoremstyle{definition}
\newtheorem{example}[equation]{Example}
\newtheorem{method}[equation]{Method}
\newcommand{\cref}[1]{Corollary~\textup{\ref{#1}}}
\newcommand{\lref}[1]{Lemma~\textup{\ref{#1}}}
\newcommand{\eref}[1]{Example~\textup{\ref{#1}}}
\newcommand{\mref}[1]{Method~\textup{\ref{#1}}}
\newcommand{\C}{\mathbb C}
\newcommand{\rank}{\operatorname{rank}} 
\def\new{\goodbreak\bigskip\noindent}
\def\J#1[#2]{\mathcal{J}_{#1\,}[#2]}
\def\B{\mathcal B}
\newcommand{\con}{\,\mathcal C}
\newcommand{\upitem}[1]{\textup{(#1)}}
\newcommand{\xx}{x_1,x_2,x_3}
\newcommand{\uu}{u_1,u_2,u_3}
\newcommand*{\fancy}[1]{{\color{white}\contour{black}{#1}}}
\newcommand{\bF}{\fancy{$F$}}
\newcommand{\bDelta}{\fancy{$\Delta$}}
\newcommand{\btheta}{\fancy{$\theta$}}
\newcommand{\bS}{\fancy{$S$}}
\newcommand{\bT}{\fancy{$T$}}
\begin{document}

\title{The Waring Rank of Ternary Cubic Forms}
\author{Gary Brookfield}
\email{gbrookf@calstatela.edu}
\address{California State University, Los Angeles}
\subjclass[2020]{Primary 4A25, 12D99, 13B25}
\keywords{ternary form, cubic form, transvectant, Waring rank}
\begin{abstract}
We provide conditions on the coefficients of a ternary cubic form that determine its Waring rank. 
\end{abstract}

\maketitle

\section{Introduction}

Let \( f\in \C[ x_1, x_2,\dots, x_n] \) be a degree \( d \) form in variables \( x_1, x_2,\dots, x_n \) over the complex numbers. Then the \textbf{rank} of \( f \), written \(  \rank[f] \),  is the smallest natural number \( r \) such that \( f \) can be written as  a sum of \( r \) \( d^{\rm th} \) powers of linear forms in \( \C[ x_1, x_2,\dots, x_n] \).  To distinguish this definition of rank from others in the literature, \( \rank[f] \) is often called the \textbf{Waring rank} of the form \( f \).

For example, the cubic form \( f_{xxx}=x_1x_2x_3 \) can be written as a sum of four cubes:

\newcommand{\sq}[2][0]{
  \mbox{$\medmuskip=#1mu\displaystyle#2$}%
}

\begin{equation}\label{eq1}
\sq[1]{f_{xxx}=\frac1{24}\left((x_1+x_2+x_3)^3+(x_1-x_2-x_3)^3+(x_2-x_3-x_1)^3+(x_3-x_1-x_2)^3\right)}
\end{equation}
There are many other ways of expressing \( f_{xxx} \) as a sum of four cubes, but it is not possible to express \( f_{xxx} \) as a sum of three cubes. Hence the  rank of \( f_{xxx}=x_1x_2x_3 \)  is four.   See Section~\ref{sectreducible} for details.

The  coefficient \( 1/24 \) on the cubes in the above expression for \( f_{xxx} \) is immaterial. Because we are working over the complex numbers, any such coefficients can be absorbed into the linear forms being cubed.

The Waring rank of forms has been studied in many special cases over the past 150 years. For example, much is known about the  rank of binary forms, quadratic forms in any number of variables, completely reducible forms, monomials and binary binomials \cite{carlini2016,moncusi}. An introduction to the  rank problem and its history can be found in \cite{iarrobino1999}. Another general reference with an extensive bibliography is \cite{carlini2013}.

For ternary cubic forms, the state of the art is represented by the table of the  ranks of ternary cubic forms having particular normal forms  found in \cite[Table~1]{landsberg2010}.  See also \cite{comon1996}. But this situation is not very satisfactory in the sense that it is saying that if a cubic ternary form can be written in a certain way - one of eleven different normal forms - then it can also be written in another way as a sum of cubes. The purpose of this article, is to show how the  rank of a ternary cubic form can be determined directly from its coefficients.

\new It is fairly clear that the  rank of a form is unchanged by a linear change of the variables. For example, if \( f_{xxx}=a_xb_xc_x \) with \( \{a_x,b_x,c_x\} \)  linearly independent, then the linear transformation \( x_1\mapsto a_x \), \( x_2\mapsto b_x \), \( x_3\mapsto c_x \) sends \(  x_1x_2x_3 \) to \( f_{xxx} \). Applying this transformation to both sides of \eqref{eq1} we see that \( f_{xxx} \) is a sum of four cubes. See \eqref{eq128}. Since the linear transformation is invertible we get \( \rank[f_{xxx}]=\rank[x_1x_2x_3]=4 \). 

So, in general, the  rank of a form is an invariant of the form---it is unchanged by invertible linear transformations of its variables. Then it should be no surprise that the  rank of a form  depends on its concomitants (invariants, covariants, contravariants).

The simplest example of this is the generic case: If the  Aronhold invariant \( S \)  is nonzero for a ternary cubic form \( f_{xxx} \), then \( \rank[f_{xxx}]=4 \) (\lref{lem37}).  
This, and the other main results of this article, are summarized in the table in Section~\ref{sectmain}.

\section{Preliminaries}

All of the definitions and notation used in this article come from \textit{Completely Reducible Cubic Ternary Forms} \cite{brookfield2021}.  All forms have coefficients in \( \C \), the field of complex numbers, and all are ternary, that is, they are homogeneous polynomials in three variables \( \xx \).  We write  \( f_x \), \( f_{xx} \) and \( f_{xxx} \) for linear, quadratic and cubic forms in these variables. For a form of unspecified degree, we drop the subscripts. The \( n^{\text{th}} \) transvectant of forms  \( f \), \( g \) and \( h \) is defined in \cite[Section~4]{brookfield2021} and is written \( \J{n}[f,g,h] \). When nonzero, \( \J{n}[f,g,h] \) is a form of degree \( \deg f+\deg g+\deg h-3n \) in \( \xx \). The first transvectant is just the Jacobian, which we write as \( \J[f,g,h] \). Frequently we will use ``contravariant'' variables \( \uu \), which geometrically, are the variable coordinates of the line \( u_x=u_1x_1+u_2x_2+u_3x_3 \).

For forms \( a_{x}=a_1x_1+a_2x_2+a_3x_3 \),  \( b_{x}=b_1x_1+b_2x_2+b_3x_3 \) and \( c_{x}=c_1x_1+c_2x_2+c_3x_3 \), \( \J[a_x,b_x,c_x] \) is just the determinant of the coefficients of these forms.  For compactness, we abbreviate this as \( [abc] \): 
\[ [abc]=\J[a_x,b_x,c_x]=
\begin{vmatrix}
a_1  &a_2  & a_3 \\ b_1  &b_2  & b_3 \\c_1  &c_2  & c_3 
\end{vmatrix}
 \] 
So \( \{a_x,b_x,c_x\} \) is linearly dependent if and only if \( [abc]=0 \). A pair of  forms \( \{a_x,b_x\} \) is linearly dependent if and only if \( [abu]=0 \). Since \( \uu \) are variables, \( [abu] \) is a linear form in \( \uu \) with three coefficients that are functions of the coefficients of \( a_x \) and \( b_x \). Thus \( \{a_x,b_x\} \) is linearly dependent if and only if all three of these coefficients are zero.

We also need the contraction operator \( \con_{ux} \) defined by \[ \con_{ux}[f ]= \frac{\partial^2 f }{\partial x_1\partial u_1} +\frac{\partial^2 f }{\partial x_2\partial u_2}+\frac{\partial^2 f }{\partial x_3\partial u_3} \]
for all  forms \( f \) in \( \xx \) and \( \uu \). We use the notation \( \Vert \,\cdot\,\Vert_{\sigma}  \) for substitutions as explained in \cite{brookfield2021}. For example, we write  \( S \) for the  Aronhold invariant of the cubic form \( f_{xxx} \). The Aronhold invariant of another cubic form \( g_{xxx} \) could be written as \( \Vert S\Vert_{f\mapsto g} \) where \( f\mapsto g \) denotes the substitution of the coefficients of \( f_{xxx} \) by the coefficents of \( g_{xxx} \).

Somewhat imprecisely, we will say that the forms that can be constructed from a given form \( f \) and \( u_x \) using transvectants, contractions and substitutions are \textbf{concomitants} of \( f\). For the precise definition and for its importance to the theory of invariants, see Grace and Young, \textit{The Algebra of Invariants}  \cite{grace}.

\section{The  Rank of Quadratic Forms}

The solution of  rank problem for ternary quadratic forms is well known, but it is nonetheless worthwhile recording how the  rank of such a form \( f_{xx} \) is determined by the two concomitants \( \J2[f_{xx},f_{xx},u_x^2]=0 \)  and \( \J2[f_{xx},f_{xx},f_{xx}]=0 \).

\begin{lemma}\label{lem10}
Let \( f_{xx} \) be a quadratic form.
\begin{enumerate}
\item \( \rank[f_{xx}]=1 \) if and only if \( \J2[f_{xx},f_{xx},u_x^2]=0 \) and \( f_{xx}\neq 0 \).

\item \( \rank[f_{xx}]=2 \) if and only if \( \J2[f_{xx},f_{xx},f_{xx}]=0 \) and \( \J2[f_{xx},f_{xx},u_x^2]\neq 0 \).

\item \( \rank[f_{xx}]=3 \) if and only if \( \J2[f_{xx},f_{xx},f_{xx}]\neq 0 \).
\end{enumerate}
\end{lemma}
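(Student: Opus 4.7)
The plan is to reduce the problem to linear algebra by observing that the Waring rank of a ternary quadratic equals the rank of its coefficient matrix, and then to identify the two concomitants in the statement explicitly.

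Write $f_{xx} = x^{T} A x$ for a symmetric $3 \times 3$ matrix $A$. Over $\C$, every symmetric matrix is congruent to a diagonal matrix with ones and zeros, so after an invertible linear substitution $f_{xx}$ becomes $y_1^2 + \cdots + y_r^2$ with $r = \rank A$. This gives $\rank[f_{xx}] \le r$, and the converse inequality follows because any expression $f_{xx} = \sum_{i=1}^{k} \ell_i^2$ realizes $A$ as a sum of $k$ symmetric rank-one matrices, so $r \le k$. Hence $\rank[f_{xx}] = \rank A$, and the three cases of the lemma correspond exactly to $\rank A = 1, 2, 3$.

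Next, working directly from the definition of the second transvectant in \cite[Section~4]{brookfield2021}, I would show that $\J2[f_{xx}, f_{xx}, f_{xx}]$ is a nonzero scalar multiple of $\det A$, the classical discriminant of a ternary quadratic, while $\J2[f_{xx}, f_{xx}, u_x^{2}]$ is a nonzero scalar multiple of the contravariant quadratic $u^{T}(\operatorname{adj} A) u$ (whose $u$-coefficient matrix is the cofactor matrix of $A$). Both identifications can be verified by expanding on a generic $f_{xx} = \sum_{i \le j} a_{ij} x_i x_j$: the resulting polynomials in the $a_{ij}$ are degree $3$, transform correctly under $GL_3$, and match $\det A$ and $u^{T}(\operatorname{adj} A) u$ up to a constant that is easily pinned down on, say, $f_{xx} = x_1^2 + x_2^2 + x_3^2$.

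With these two identifications in hand, the equivalences of the lemma are immediate from standard linear algebra: $\operatorname{adj} A = 0$ iff $\rank A \le 1$, so combined with $f_{xx} \ne 0$ this characterizes $\rank A = 1$; the condition $\det A = 0$ and $\operatorname{adj} A \ne 0$ characterizes $\rank A = 2$; and $\det A \ne 0$ characterizes $\rank A = 3$. The main obstacle is not conceptual but bookkeeping: one must carry out the second-transvectant expansion carefully enough in the conventions of \cite{brookfield2021} to be certain the two proportionality constants relating the transvectants to $\det A$ and $u^{T}(\operatorname{adj} A) u$ are nonzero. Everything else is elementary.
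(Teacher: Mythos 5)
Your argument is correct, but it takes a genuinely different route from the paper: the paper's entire proof is a citation to Lemma~6.1 of \cite{brookfield2021}, whereas you give a self-contained reduction to linear algebra. Your two key steps are both sound. The identity $\rank[f_{xx}]=\rank A$ for the symmetric coefficient matrix $A$ is the standard diagonalization argument over $\C$, and your identifications of the concomitants are consistent with formulas appearing later in the paper: for instance, the paper records $\J2[f_{xx},f_{xx},f_{xx}]=-12[abc]^2$ for $f_{xx}=a_xc_x+b_x^2$, and a direct check on $f_{xx}=x_1x_3+x_2^2$ gives $\det A=-1/4$, so $\J2[f_{xx},f_{xx},f_{xx}]=48\det A$; similarly $\J2[f_{xx},f_{xx},u_x^2]$ is (degree $0$ in $x$, degree $2$ in $u$, degree $2$ in the coefficients) forced to be proportional to $u^{T}(\operatorname{adj}A)u$. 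The one point to tighten is your appeal to equivariance to pin down proportionality: strictly you need the classical fact that the degree-$3$ invariants of a ternary quadratic form a one-dimensional space spanned by $\det A$ (and the analogous statement for the contravariant), or else you should just expand the transvectants on a generic $f_{xx}$ in the conventions of \cite{brookfield2021}---which is exactly the bookkeeping you flag. What your approach buys is transparency: the two conditions of the lemma become $\det A=0$ and $\operatorname{adj}A=0$, i.e.\ the discriminant and the dual conic, which makes the three rank cases immediate. What the paper's approach buys is consistency: it keeps everything inside the transvectant formalism of the companion paper, where these identifications have already been verified once and for all.
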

 
\begin{proof}
This is just Lemma~6.1 of \cite{brookfield2021} expressed in terms of  rank.
\end{proof}

 If \( \rank[f_{xx}]\leq 2 \), then \( f_{xx} \) is reducible because
 \[ f_{xx}=a_x^2+b_x^2=(a_x+i b_x)(a_x-i b_x).\] The converse is true since, if \( f_{xx}=a_xb_x \), then \( \J2[f_{xx},f_{xx},f_{xx}]=0 \)\ \cite[Lemma~6.2]{brookfield2021}, and, in fact, \( f_{xx} \) can be written as a sum of two cubes in many ways:
 \[ f_{xx}=\frac1{4a_0b_0}\left( (a_0 a_x+b_0 b_x)^2 - (a_0 a_x-b_0 b_x)^2\right)\] 
 where \( a_0b_0\neq 0 \). Therefore we have the following lemma:
 
 \begin{lemma}\label{lemx55}
 For a quadratic form \( f_{xx} \), the following are equivalent:
 \begin{enumerate}
 \item \( \J2[f_{xx},f_{xx},f_{xx}]=0 \) 
 
 \item \( \rank[f_{xx}]\leq 2 \)
 
 \item \(  f_{xx} \) is reducible
 \end{enumerate}
 \end{lemma}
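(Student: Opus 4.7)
The plan is to prove the three-way equivalence by a cyclic chain of implications $(2)\Rightarrow(3)\Rightarrow(1)\Rightarrow(2)$. Essentially every ingredient already appears in the paragraph preceding the statement, so the proof amounts to organizing those facts and invoking \lref{lem10}.

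First I would handle $(2)\Rightarrow(3)$. Given a rank-at-most-two decomposition $f_{xx}=a_x^2+b_x^2$ (where $b_x=0$ covers the rank-one case), the identity $a_x^2+b_x^2=(a_x+ib_x)(a_x-ib_x)$ exhibits $f_{xx}$ as a product of two linear forms over $\C$, hence as a reducible form.

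Next, for $(3)\Rightarrow(1)$, I would cite Lemma~6.2 of \cite{brookfield2021}: writing $f_{xx}=a_xb_x$ immediately yields $\J2[f_{xx},f_{xx},f_{xx}]=0$. This is the step I am treating as a black box, and it is really the only piece of the argument that is not pure bookkeeping.

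Finally, for $(1)\Rightarrow(2)$, I would invoke the contrapositive of \lref{lem10}(3), which says that $\rank[f_{xx}]=3$ if and only if $\J2[f_{xx},f_{xx},f_{xx}]\neq 0$; hence vanishing of the transvectant forces $\rank[f_{xx}]\leq 2$. As a sanity check, the explicit formula $f_{xx}=\frac{1}{4a_0b_0}\bigl((a_0a_x+b_0b_x)^2-(a_0a_x-b_0b_x)^2\bigr)$ displayed just above the statement provides a direct $(3)\Rightarrow(2)$, so if one prefers to bypass \lref{lem10} the equivalence can alternatively be closed by $(2)\Rightarrow(3)\Rightarrow(2)$ combined with $(3)\Rightarrow(1)$ and an observation that $(1)$ and $(2)$ agree via the rank-three part of \lref{lem10}. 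Either route completes the argument.
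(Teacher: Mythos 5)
Your proposal is correct and matches the paper's own argument, which is given in the paragraph immediately preceding the lemma: the factorization $a_x^2+b_x^2=(a_x+ib_x)(a_x-ib_x)$ for $(2)\Rightarrow(3)$, Lemma~6.2 of \cite{brookfield2021} for $(3)\Rightarrow(1)$, and \lref{lem10}(3) (equivalently the explicit difference-of-squares formula) to close the cycle. No substantive differences.
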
 

\new We also note some properties of the joint concomitants of  \( f_{xx} \) and a linear form \( a_x \). 

\begin{lemma}\label{lem21}
Let  \( f_{xx} \)  and  \( a_x \) be nonzero forms. 
\begin{enumerate}
\item  \( \J[f_{xx},a_x,u_x]= 0 \) if and only if \( a_x^2 \) divides \( f_{xx}\).

\item \( \J2[f_{xx},a_x^2,u_x^2]=0 \) if and only if \( a_x \) divides \( f_{xx}\).

\end{enumerate}
\end{lemma}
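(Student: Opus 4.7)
The plan is to use the covariance of transvectants and divisibility under invertible linear changes of variables to reduce both parts to the case \(a_x=x_1\), after which each side becomes a direct coefficient condition on \(f_{xx}\).

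For part (1), with \(a_x=x_1\), the Jacobian \(\J[f_{xx},x_1,u_x]\) is the determinant of the matrix with rows \((\partial_1 f_{xx},\partial_2 f_{xx},\partial_3 f_{xx})\), \((1,0,0)\), and \((u_1,u_2,u_3)\). Expansion along the middle row gives \(u_2\,\partial_3 f_{xx}-u_3\,\partial_2 f_{xx}\), which vanishes as a polynomial in \(\xx,\uu\) iff \(\partial_2 f_{xx}=\partial_3 f_{xx}=0\), iff \(f_{xx}\in\C x_1^2\), iff \(x_1^2\mid f_{xx}\).

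For part (2), the ``if'' direction is cleanest via polarization of \lref{lem10}(1). Given \(f_{xx}=a_xb_x\), the one-parameter family \(g_t=(a_x+tb_x)^2=a_x^2+2tf_{xx}+t^2b_x^2\) consists of rank-\(1\) quadratics, so \(\J2[g_t,g_t,u_x^2]=0\) identically in \(t\). Trilinearity and the symmetry of \(\J2\) in its first two arguments, together with \(\J2[a_x^2,a_x^2,u_x^2]=\J2[b_x^2,b_x^2,u_x^2]=0\), reduce the coefficient of \(t\) to a nonzero constant times \(\J2[f_{xx},a_x^2,u_x^2]\), which must therefore be zero. For the converse I would set \(a_x=x_1\), write \(f_{xx}=\alpha_x^2\) symbolically, and apply the identity \(\J2[\alpha_x^2,\beta_x^2,\gamma_x^2]=c\,(\alpha\beta\gamma)^2\) from \cite[Section 4]{brookfield2021} with \(\beta_x=x_1\) and \(\gamma_x=u_x\). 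The bracket reduces to \(\alpha_3u_2-\alpha_2u_3\), whose square translates, via the dictionary \(\alpha_i\alpha_j\leftrightarrow f_{ij}\), to a nonzero multiple of \(f_{33}u_2^2-2f_{23}u_2u_3+f_{22}u_3^2\). Vanishing of this quadratic in \(u_2,u_3\) forces the coefficients of \(x_2^2\), \(x_2x_3\), and \(x_3^2\) in \(f_{xx}\) to vanish, which is exactly \(x_1\mid f_{xx}\).

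The main obstacle is handling the 2nd transvectant identity in part (2) correctly; one must trust or verify the symbolic identity of \cite{brookfield2021} and keep careful track of the normalizing constants. If the symbolic manipulation feels fragile, one can bypass it by evaluating the linear map \(\phi\colon f_{xx}\mapsto\J2[f_{xx},x_1^2,u_x^2]\) on the monomial basis \(\{x_1^2,x_1x_2,x_1x_3,x_2^2,x_2x_3,x_3^2\}\): the first three lie in the kernel by the already-proved ``if'' direction, and a short direct computation shows the images of the last three are linearly independent in \(\C[\uu]_2\), forcing the kernel to be exactly the multiples of \(x_1\).
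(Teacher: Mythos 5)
Your proof is correct, but it is worth noting that the paper itself does not prove this lemma at all: it simply cites Lemmas~5.2(B1) and 5.2(B2) of \cite{brookfield2021}. So your argument is necessarily a different (and self-contained) route. Part (1) is a clean direct computation: after normalizing \(a_x=x_1\) by covariance, the Jacobian reduces to \(u_2\,\partial_3 f_{xx}-u_3\,\partial_2 f_{xx}\), whose vanishing in \(\uu\) is exactly \(f_{xx}\in\C x_1^2\); this is right, and the reduction is legitimate since both the transvectant condition and divisibility by \(a_x^2\) are preserved under invertible linear substitutions. In part (2), the polarization trick for the ``if'' direction (differentiating \(\J2[(a_x+tb_x)^2,(a_x+tb_x)^2,u_x^2]=0\) in \(t\) and reading off the coefficient of \(t\)) is a nice way to piggyback on \lref{lem10}(1) without touching the explicit formula, and the converse via either the symbolic bracket identity or the monomial-basis kernel computation is sound; the basis argument is the more robust of the two since it avoids any dependence on the normalization of the transvectant, needing only that \(x_2^2\), \(x_2x_3\), \(x_3^2\) map to independent quadratics in \(u_2,u_3\). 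The only thing you are trusting without verification is the symmetry/multilinearity of \(\J2\) in its arguments and the covariance used for the reductions, both of which are standard properties established in \cite{brookfield2021}; with those granted, the proof is complete.
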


\begin{proof}
See Lemmas~5.2(B1) and 5.2(B2) of \cite{brookfield2021}.
\end{proof}

\begin{lemma}\label{lemx66}
Suppose that \( f_{xx} \) is irreducible and \( a_x \) is nonzero.
\begin{enumerate}
\item \(  \J2[f_{xx},f_{xx},a_x^2]=0  \) if and only if \( f_{xx}=a_x c_x+  b_x^2 \) for some forms \( b_x \) and \( c_x \) such that \( \{a_x,b_x,c_x\} \) is linearly independent.

\item \(  \J2[f_{xx},f_{xx},a_x^2]\neq 0  \) if and only if \( f_{xx}=a_0 a_x ^2+ b_x c_x \) for some  forms \( b_x \) and \( c_x \) such that \( \{a_x,b_x,c_x\} \) is linearly independent,  and some nonzero \( a_0\in \C \).
\end{enumerate}
\end{lemma}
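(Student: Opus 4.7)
The overall strategy is to interpret the vanishing of $\J2[f_{xx},f_{xx},a_x^2]$ as a geometric condition on how the line $a_x=0$ meets the smooth conic $f_{xx}=0$, and then to unfold that condition into the stated algebraic decompositions. Since the transvectant is a concomitant, a linear change of variables multiplies it by a nonzero power of the determinant, so I may assume $a_x=x_1$. Writing $f_{xx}=Ax_1^2+2Bx_1x_2+2Cx_1x_3+Dx_2^2+2Ex_2x_3+Fx_3^2$, the restriction $f_{xx}(0,x_2,x_3)=Dx_2^2+2Ex_2x_3+Fx_3^2$ is a nonzero binary quadratic (nonzero because otherwise $a_x\mid f_{xx}$, contradicting irreducibility), with discriminant $DF-E^2$. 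The \textbf{main claim} is that $\J2[f_{xx},f_{xx},x_1^2]$ equals a nonzero scalar multiple of $DF-E^2$. Intrinsically this is the value of the dual conic $u^T\operatorname{adj}(M)u$ at the coefficients of $a_x$, so the transvectant vanishes exactly when $a_x=0$ is tangent to $f_{xx}=0$.

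Granting this identification, the rest is algebraic lifting. For (1), $\J2[f_{xx},f_{xx},a_x^2]=0$ says $f_{xx}(0,x_2,x_3)$ is a perfect square; lift one of its square roots to a linear form $b_x$, so that $f_{xx}-b_x^2$ vanishes on $a_x=0$ and hence factors as $a_xc_x$. Then $\{a_x,b_x\}$ is linearly independent, else $a_x\mid f_{xx}$; and if $c_x=\alpha a_x+\beta b_x$, then $f_{xx}=\alpha a_x^2+\beta a_xb_x+b_x^2$ is a binary quadratic in $\{a_x,b_x\}$, which factors over $\C$, again contradicting irreducibility. So $\{a_x,b_x,c_x\}$ is linearly independent. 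The converse is immediate: restricting $a_xc_x+b_x^2$ to $a_x=0$ yields the square of a nonzero linear form in $x_2,x_3$, whose discriminant is zero.

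For (2), $\J2[f_{xx},f_{xx},a_x^2]\neq 0$ says $f_{xx}(0,x_2,x_3)$ is a product of two non-proportional linear forms; lift these to $b_x,c_x$, yielding $f_{xx}-b_xc_x=a_x\ell_x$ for some linear $\ell_x$. The triple $\{a_x,b_x,c_x\}$ is a basis because $b_x|_{a_x=0}$ and $c_x|_{a_x=0}$ are nonzero and non-proportional. Expanding $\ell_x=\alpha a_x+\beta b_x+\gamma c_x$ in this basis and regrouping gives
\[
f_{xx}=(\alpha-\beta\gamma)\,a_x^2+(b_x+\gamma a_x)(c_x+\beta a_x),
\]
and replacing $b_x,c_x$ by the shifted forms (a unitriangular change of basis, which preserves linear independence) produces $f_{xx}=a_0 a_x^2+b_xc_x$ with $a_0=\alpha-\beta\gamma$; irreducibility of $f_{xx}$ forces $a_0\neq 0$. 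The converse is again by restriction to $a_x=0$, which yields a product of two non-proportional linears and hence nonvanishing discriminant. The \textbf{main obstacle} in this plan is the main claim above---identifying $\J2[f_{xx},f_{xx},a_x^2]$ with the discriminant of $f_{xx}|_{a_x=0}$ up to a nonzero scalar---which I expect to be a short direct computation from the definition of the second transvectant in \cite[Section~4]{brookfield2021}, or a corollary of identities already established there.
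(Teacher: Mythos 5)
Your proof is correct, but it takes a genuinely different route from the paper's. The paper handles (1) by citing \cite[Lemma~6.4]{brookfield2021} to get the decomposition \( f_{xx}=a_xc_x+b_x^2 \) outright, and handles (2) by a coordinate-free trick: it sets \( g_{xx}=f_{xx}-a_0a_x^2 \) and uses the linearity identity \( \J2[g_{xx},g_{xx},g_{xx}]=\J2[f_{xx},f_{xx},f_{xx}]-3a_0\J2[f_{xx},f_{xx},a_x^2] \) to choose \( a_0 \) killing the discriminant of \( g_{xx} \) (here the hypothesis \( \J2[f_{xx},f_{xx},a_x^2]\neq0 \) is exactly what lets one solve for \( a_0 \)), whence \( g_{xx}=b_xc_x \) by \lref{lemx55}; in both parts the independence of \( \{a_x,b_x,c_x\} \) is then read off from the identities \( \J2[f_{xx},f_{xx},f_{xx}]=-12[abc]^2 \) and \( -12a_0[abc]^2 \). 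You instead normalize \( a_x=x_1 \) and identify \( \J2[f_{xx},f_{xx},a_x^2] \) with the discriminant of the restriction \( f_{xx}|_{a_x=0} \) (equivalently, the adjugate quadratic evaluated at the coefficients of \( a_x \)), then lift the square root or the two distinct roots back to ternary forms; your regrouping \( f_{xx}=(\alpha-\beta\gamma)a_x^2+(b_x+\gamma a_x)(c_x+\beta a_x) \) and the independence arguments (a binary quadratic in \( a_x,b_x \) splits over \( \C \), contradicting irreducibility) are all sound. The only thing you leave unproved is your ``main claim,'' and it is indeed true and routine: it is precisely the content of the paper's geometric reformulation in \lref{lemx77} and is consistent with \lref{lem10}\upitem1 (vanishing of the adjugate characterizes rank one), so a short direct computation closes it. What the paper's approach buys is independence from coordinates and a mechanism (complete-the-square via the discriminant identity) reused elsewhere; what yours buys is a self-contained, transparent geometric picture that also proves the two converses directly rather than inferring them from the mutual exclusivity of the two decompositions.
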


\begin{proof}
\begin{enumerate}
\item If \(  \J2[f_{xx},f_{xx},a_x^2]=0  \), then, from  \cite[Lemma~6.4]{brookfield2021},  \( f_{xx}=a_x c_x+ b_x^2 \) for some  forms \( b_x \) and \( c_x \). In this circumstance,  \( \J2[f_{xx},f_{xx},f_{xx}]=-12 [abc]^2  \),  so \( f_{xx} \)  is irreducible if and only if \( [abc]\neq 0 \), if and only if \( \{a_x,b_x,c_x\} \) is linearly dependent. 

\item Let \( g_{xx}=f_{xx} - a_0 a_x^2 \) with \( a_0\in \C \) to be determined. From the identity
\[ \J2[g_{xx},g_{xx},g_{xx}] = 
 \J2[f_{xx}, f_{xx}, f_{xx}] - 3 a_0 \J2[f_{xx}, f_{xx}, a_x^2], \] 
 we see that \( a_0\in \C \) can be chosen so that \( \J2[g_{xx},g_{xx},g_{xx}]  = 0 \). By \lref{lemx55}, \( g_{xx}=b_xc_x \) for some  forms \( b_x \) and \( c_x \) and so \( f_{xx}=a_0 a_x ^2+ b_x c_x \).  In this circumstance,  \( \J2[f_{xx},f_{xx},f_{xx}]=-12 a_0 [abc]^2  \),  so \( f_{xx} \)  is irreducible if and only if \( a_0 [abc]\neq 0 \), if and only if \( \{a_x,b_x,c_x\} \) is linearly dependent and \( a_0\neq 0 \). 
 \qedhere

\end{enumerate}
\end{proof}

The two conditions in this lemma have an important geometric interpretation. When \( \{a_x,b_x,c_x\} \) is independent, the lines \( a_x=0 \), \( b_x=0 \) and \( c_x=0 \) form a triangle with three distinct vertices.  If  \( f_{xx}=a_x c_x+ b_x^2 \), then  the line \( a_x=0 \) and the conic \( f_{xx}=0 \) intersect at a single point, namely,  the intersection of \( a_x \) and \( b_x \). Hence \( a_x=0 \) is tangent to \( f_{xx}=0 \) at that point.

In the other case, \( f_{xx}=a_0 a_x ^2+ b_x c_x \),   the line \( a_x=0 \) and the conic \( f_{xx}=0 \) intersect at two distinct points, namely,  the intersection of \( a_x \) and \( b_x \), and the intersection of  \( a_x \) and \( c_x \). Hence \( a_x=0 \) is a secant line of \( f_{xx}=0 \).

Thus we have the following geometric lemma. 
\begin{lemma}\label{lemx77}
Suppose that \( f_{xx} \) is irreducible and \( a_x \) is nonzero.
\begin{enumerate}
\item \(  \J2[f_{xx},f_{xx},a_x^2]=0  \) if and only if \( a_x=0 \) is a tangent line of  \( f_{xx}=0 \).

\item \(  \J2[f_{xx},f_{xx},a_x^2]\neq 0  \) if and only if \( a_x=0 \) is a secant line of  \( f_{xx}=0 \).
\end{enumerate}
\end{lemma}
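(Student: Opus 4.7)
The plan is to invoke \lref{lemx66} to replace each algebraic condition by a normal form for \( f_{xx} \), and then read off the intersection of the line \( a_x=0 \) with the conic \( f_{xx}=0 \) directly from that normal form. This way no additional transvectant computation is needed; the work is entirely in translating each of the two exclusive algebraic cases into the corresponding exclusive geometric case.

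For (1), assume \( \J2[f_{xx},f_{xx},a_x^2]=0 \). By \lref{lemx66}(1) we may write \( f_{xx}=a_x c_x+b_x^2 \) with \( \{a_x,b_x,c_x\} \) linearly independent. On the line \( a_x=0 \), the equation \( f_{xx}=0 \) reduces to \( b_x^2=0 \). Since \( \{a_x,b_x\} \) is linearly independent, the loci \( a_x=0 \) and \( b_x=0 \) meet at a single projective point, and the factor \( b_x^2 \) shows the intersection has multiplicity two there; hence \( a_x=0 \) is tangent to \( f_{xx}=0 \).

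For (2), assume \( \J2[f_{xx},f_{xx},a_x^2]\neq 0 \). By \lref{lemx66}(2) we may write \( f_{xx}=a_0 a_x^2+b_x c_x \) with \( a_0\neq 0 \) and \( \{a_x,b_x,c_x\} \) linearly independent. On the line \( a_x=0 \), the equation \( f_{xx}=0 \) reduces to \( b_x c_x=0 \). Linear independence of \( \{a_x,b_x\} \) and of \( \{a_x,c_x\} \) shows that the two loci \( a_x=b_x=0 \) and \( a_x=c_x=0 \) are distinct projective points; hence \( a_x=0 \) meets \( f_{xx}=0 \) in two distinct points and is a secant.

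For the reverse implications, note that since \( f_{xx} \) is irreducible the line \( a_x=0 \) cannot be a component of the conic, so it meets the conic in either one point with multiplicity two (tangent) or two distinct points (secant), and these two situations are mutually exclusive. The algebraic conditions \( \J2[f_{xx},f_{xx},a_x^2]=0 \) and \( \J2[f_{xx},f_{xx},a_x^2]\neq 0 \) are likewise a partition, so the forward implications in (1) and (2) automatically give the reverse implications as well. The main obstacle, if any, is simply a careful check that the linear independence of the three-form set \( \{a_x,b_x,c_x\} \) supplied by \lref{lemx66} is exactly what is needed to identify the intersection loci as one point (respectively, two distinct points) in the projective plane.
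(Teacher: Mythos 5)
Your proof is correct and follows essentially the same route as the paper, which likewise establishes this lemma by applying \lref{lemx66} to put \( f_{xx} \) into the two normal forms \( a_xc_x+b_x^2 \) and \( a_0a_x^2+b_xc_x \) and reading off the intersection of the line \( a_x=0 \) with the conic. One small point: in case (2) it is the independence of the full triple \( \{a_x,b_x,c_x\} \), not merely of the pairs \( \{a_x,b_x\} \) and \( \{a_x,c_x\} \), that forces the two intersection points to be distinct --- pairwise independence only makes each locus a single point --- but you flag exactly this check in your closing paragraph, so the argument is complete.
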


\new Note the difference between  \( \J2[f_{xx},f_{xx},u_x^2]=0  \) and  \( \J2[f_{xx},f_{xx},a_x^2]=0  \) in Lemmas~\ref{lem10}\upitem1 and~\ref{lemx66}\upitem1. In the first case, the coefficients of \( u_x \) are variables, so  \( \J2[f_{xx},f_{xx},u_x^2] \) is a quadratic form in \( \uu \) with six coefficients. The assumption that \( \J2[f_{xx},f_{xx},u_x^2]=0  \) means that all six coefficients are zero. In contrast, the coefficients of \( a_x \) are fixed, so \( \J2[f_{xx},f_{xx},a_x^2]  \) is a single complex number.   Hence \( \J2[f_{xx},f_{xx},a_x^2] =0 \)  is much weaker condition. 

\section{The Concomitants of $f_{xxx}$}
 
The most important concomitants of a ternary cubic form \( f_{xxx} \) are defined by
 \begin{equation}\label{eq837}
\begin{aligned}
\theta_{uuxx}&=\frac14\J2[f_{xxx},f_{xxx},u_x^2]&
\Delta_{xxx}&=\frac 1{12}\J2[f_{xxx},f_{xxx},f_{xxx}]\\
S_{uuu}&=\frac1{576}\J4[f_{xxx} u_x,f_{xxx}u_x,f_{xxx}u_x]&
S&=\Vert S_{uuu} \Vert_{u^3\mapsto f}\\
T_{uuu}&=\frac1{576}\J4[f_{xxx} u_x,f_{xxx}u_x,\Delta_{xxx}u_x]&
T&=\Vert T_{uuu} \Vert_{u^3\mapsto f}\\
\end{aligned}
\end{equation}
\[ F_{6u} =\frac1{3072} \J2[\theta_{uuxx},\theta_{uuxx},u_x^2] \]
and are discussed in much  detail in \cite[Section~7]{brookfield2021}.  Following the convention for subscripts, \( \Delta_{xxx} \)  is a cubic form in \( \xx \) (called the \textbf{Hessian} of \( f_{xxx} \)), \( \theta_{uuxx} \) is quadratic form in both \( \xx \) and \( \uu \), \( S_{uuu} \) and \( T_{uuu} \) are cubic forms in \( \uu \), and \( F_{6u}=F_{uuuuuu} \) has degree \( 6 \) in \( \uu \).  \( S \) and \( T \) are the Aronhold invariants of \( f_{xxx} \) and are constants (i.e.~degree zero in \( \xx \) and \( \uu \)). Here \( S \) is obtained from \( S_{uuu} \) by replacing the coefficients of \( u_x^3 \) by the corresponding coefficients of \( f_{xxx} \), a substitution abbreviated as \( u^3\mapsto f \).

\new The role these concomitants play in determining  rank can already be seen in their values on sums of cubes: 

\begin{lemma}\label{lem227}
Let \(a_x\), \(b_x\), \(c_x\) and \(d_x\) be linear forms.

\begin{enumerate}

\item \label{iione} If \(f_{xxx}=a_x^3\), then 
\[ \theta_{uuxx}=F_{6u}=\Delta_{xxx}=S=T=0\] 

\item \label{iitwo} If \(f_{xxx}=a_x^3+b_x^3 \), then
\[ \theta_{uuxx}=36   [abu]^2 a_xb_x\quad S=T=0 \qquad 
F_{6u}=-27 [abu]^6 \qquad
\Delta_{xxx}=0  \] 

\item \label{iithree}  If \(f_{xxx}=a_x^3+b_x^3+c_x^3 \), then
\begin{align*}
\Delta_{xxx}&= 108  [abc]^2  a_x b_x c_x & S&=0 \\ 
2S_{uuu}&= -27 [abc][abu][bcu][cau] & T&= -5832  [abc]^6 
\end{align*}

\item  \label{iifour} If  \( f_{xxx}=a_x^3+b_x^3+c_x^3+d_x^3 \), then
\( S= 1296 \, [a b c] \, [a b d] \, [a c d]  \,[b c  d] \).
\end{enumerate}
\end{lemma}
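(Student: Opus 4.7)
Every concomitant in the statement is an iterated transvectant $\J{n}[\cdot,\cdot,\cdot]$, and each transvectant is trilinear in its three arguments. The plan is to expand by multilinearity when $f_{xxx}$ is a sum of cubes of linear forms, then recognize each resulting ``basic'' transvectant of powers of linear forms from a short list of classical identities.

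\textbf{Basic identities.} I would first record (or borrow from \cite{brookfield2021}) two flavors of identity. \emph{Vanishing principle:} any positive-order transvectant of three forms that each depend only on a single linear form $a_x$ is zero---symbolically, because the contracting $\epsilon$-symbols produce a determinantal factor with two equal rows. \emph{Nonvanishing values:} $\J2[a_x^3,b_x^3,u_x^2]=k_1[abu]^2a_xb_x$, $\J2[a_x^3,b_x^3,c_x^3]=k_2[abc]^2a_xb_xc_x$, and $\J4[a_x^3u_x,b_x^3u_x,c_x^3u_x]=k_3[abc][abu][bcu][cau]$, with an analogue involving $\Delta_{xxx}$ in place of one factor of $f_{xxx}$ needed for $T$. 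The shape of each identity is forced by covariance together with degree counts; the constants $k_i$ are pinned down by a single explicit evaluation, taking $a_x=x_1$, $b_x=x_2$, $c_x=x_3$ so that every bracket equals $1$.

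\textbf{Assembling the parts.} Part~(1) is immediate from the vanishing principle. For Part~(2), expanding $\J2[f,f,u_x^2]$ with $f=a_x^3+b_x^3$ kills the two diagonal terms and leaves $2\J2[a_x^3,b_x^3,u_x^2]$, yielding $\theta_{uuxx}$; every summand of $\J2[f,f,f]$ depends on at most two linear forms, giving $\Delta_{xxx}=0$; $F_{6u}$ follows by plugging the computed $\theta_{uuxx}$ into its definition; and $S$, $T$ vanish by the same argument applied to the fourth transvectant. For Part~(3) with $f=a_x^3+b_x^3+c_x^3$, the only surviving summands of $\J2[f,f,f]$ are the six permutations of $\J2[a_x^3,b_x^3,c_x^3]$, producing $\Delta_{xxx}$; $S_{uuu}$ comes from the fourth-transvectant identity, while $S=0$ follows because the substitution $u^3\mapsto f$ applied to $[abu][bcu][cau]$ yields $\sum_i[a_iab][a_ibc][a_ica]$ with $a_i\in\{a,b,c\}$, each summand of which has a bracket with two equal rows; $T$ is an analogous computation using the already-computed $\Delta_{xxx}$. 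Part~(4) is the combinatorial climax: $S_{uuu}$ expands as a sum over the four unordered triples $\{p,q,r\}\subset\{a,b,c,d\}$, and the substitution $u^3\mapsto f$ picks out, for each triple, precisely the summand whose linear form completes the triple to the full quadruple, producing the symmetric product $[abc][abd][acd][bcd]$ up to an overall constant.

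\textbf{Main obstacle.} No deep theory enters; the work is bookkeeping. Tracking multinomial coefficients from symmetric expansions, getting signs right, and pinning down the five constants $36$, $108$, $27$, $5832$, $1296$ is tedious. The trickiest step is Part~(4), where the final constant $1296$ depends sensitively on the precise convention used to interpret the substitution $u^3\mapsto f$.
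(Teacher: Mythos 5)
The paper states \lref{lem227} without any proof at all: it is presented as a direct evaluation of the concomitants defined in \eqref{eq837} (equivalently, in \cite{brookfield2021}), so there is no argument in the text to compare yours against line by line. On its own merits, your plan is sound and is the standard way such a verification would go: expand each transvectant by trilinearity over the sum of cubes, discard the terms killed by a repeated letter in a bracket, and recognize the survivors as bracket monomials. Your structural claims check out, including the two subtle points: that the substitution \(u^3\mapsto f\) applied to \([abu][bcu][cau]\) in part (3) produces only summands with a repeated letter (hence \(S=0\)), and that in part (4) each of the four triples is completed by exactly the one remaining form, so every surviving term is proportional to \([abc][abd][acd][bcd]\).

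Two small cautions. First, your ``vanishing principle'' should be stated as: a transvectant of positive order vanishes whenever \emph{two} of its three arguments are powers of the same linear form (two equal rows in the bracket), since that is what you actually use, e.g.\ for \(\J2[a_x^3,a_x^3,u_x^2]=0\) in parts (1) and (2), where the third argument is \(u_x^2\), not a power of \(a_x\). Second, the deferred constants are not quite an afterthought: they carry the content of the lemma that the rest of the paper actually uses (e.g.\ the nonvanishing of \(F_{6u}\) and \(T\) in Lemmas \ref{lem33} and \ref{lem36} depends on the coefficients \(-27\) and \(-5832\) being nonzero), and for part (4) the normalization must be pinned down with a fourth form \(d_x\) in general position, such as \(d_x=x_1+x_2+x_3\), not merely with \(a_x,b_x,c_x\) the coordinate forms. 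With those points attended to, your outline is a complete and correct proof strategy.
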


So, for example, if \( S\neq 0 \), then \( f_{xxx} \) cannot be a sum of three cubes as in \lref{lem227}\eqref{iithree}, so the  rank of \( f_{xxx} \) must be at least four. This, and a few similarly easy consequences of \lref{lem227}, are collected in the following lemma.

\begin{lemma} \label{lem2145}
Let \(f_{xxx}\) be a cubic form.
\begin{enumerate}

\item If \(  \theta_{uuxx}\neq 0 \), then \(\rank[f_{xxx}]\geq 2\).

\item If \(\Delta_{xxx}\neq 0\), then \(\rank[f_{xxx}]\geq 3\).

\item If \( F_{6u}=0   \) and \(  \theta_{uuxx}\neq 0 \), then \(\rank[f_{xxx}]\geq 3\).

\item If \(S\neq 0\), then \(\rank[f_{xxx}]\geq 4\).

\item If \(T=0\) and \(\Delta_{xxx}\neq 0\), then \(\rank[f_{xxx}]\geq 4\).
\end{enumerate}
\end{lemma}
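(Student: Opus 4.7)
The approach is to prove each item by contrapositive, using the formulas of \lref{lem227} to show that a cubic of low rank cannot satisfy the stated hypothesis.

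For items (1), (2), and (4) the argument is a one-line reading of \lref{lem227}. If $\rank[f_{xxx}]\leq 1$, then $f_{xxx}$ is $0$ or $a_x^3$, so \lref{lem227}\eqref{iione} gives $\theta_{uuxx}=0$. If $\rank[f_{xxx}]\leq 2$, the possibilities are $0$, $a_x^3$, or $a_x^3+b_x^3$, and \lref{lem227}\eqref{iione}--\eqref{iitwo} show $\Delta_{xxx}=0$ in all three. If $\rank[f_{xxx}]\leq 3$, \lref{lem227}\eqref{iione}--\eqref{iithree} all yield $S=0$. Each contrapositive is then immediate.

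For item (3), suppose $\rank[f_{xxx}]\leq 2$. The rank-$\leq 1$ subcase is ruled out because it forces $\theta_{uuxx}=0$, contradicting the hypothesis. Otherwise $f_{xxx}=a_x^3+b_x^3$ with $a_x,b_x$ linearly independent, so by \lref{lem227}\eqref{iitwo},
\[ \theta_{uuxx}=36\,[abu]^2 a_x b_x,\qquad F_{6u}=-27\,[abu]^6. \]
The hypothesis $\theta_{uuxx}\neq 0$ forces $[abu]$ to be a nonzero linear form in $\uu$, so $F_{6u}\neq 0$, contradicting $F_{6u}=0$.

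For item (5), suppose $\rank[f_{xxx}]\leq 3$. Ranks $\leq 2$ are ruled out exactly as in (2), since they force $\Delta_{xxx}=0$. In the remaining case $f_{xxx}=a_x^3+b_x^3+c_x^3$, and \lref{lem227}\eqref{iithree} gives $\Delta_{xxx}=108\,[abc]^2 a_x b_x c_x$ and $T=-5832\,[abc]^6$. Then $\Delta_{xxx}\neq 0$ forces $[abc]\neq 0$, whence $T\neq 0$, contradicting $T=0$. There is no real obstacle here; the only thing to keep in mind is the distinction emphasized in the paragraph preceding this section between a concomitant vanishing as a form in $\uu$ versus merely a single value vanishing, but since every hypothesis and conclusion concerns either a form in $\uu$ or a scalar invariant, the contrapositives go through cleanly.
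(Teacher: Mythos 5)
Your proposal is correct and is essentially the paper's own argument: the paper states this lemma as a collection of ``similarly easy consequences of \lref{lem227},'' sketching only the case \(S\neq 0\), and your contrapositive case analysis by rank is exactly the intended filling-in of those details. The one small redundancy is in item (3), where linear independence of \(\{a_x,b_x\}\) already gives \([abu]\neq 0\) without invoking \(\theta_{uuxx}\neq 0\), but this does not affect correctness.
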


\new The following lemma collects other properties of cubic forms from \cite{brookfield2021} that are needed for this article. 

\begin{lemma}\label{lem22}
Let \( f_{xxx} \) be a cubic form and let  \( a_x \), \( b_x \) and \( c_x \) be linear forms.
\begin{enumerate}\itemsep=2pt

\item \label{cub1} \( \theta_{uuxx}=0 \) if and only if \( f_{xxx}=a_x^3 \) for some form \( a_x \).

\item \label{cub8} \( F_{6u}=0 \) if and only if \( f_{xxx}=a_x^2b_x \) for some  forms \( a_x \) and \( b_x \).

\item \label{cuc1} \( \Delta_{xxx}=0 \), if and only if \( S_{uuu}=0 \), if and only if   \textup(\( f_{xxx}=a_x^3+b_x^3 \)  or \( f_{xxx}=a_x^2 b_x \) for some  forms \( a_x \) and \( b_x \)\textup), if and only if   \( f_{xxx}=a_xb_xc_x  \)  for some  linearly dependent  forms \( \{a_x,b_x,c_x\} \).

\item \label{cub3}  \( f_{xxx} \) is completely reducible if and only if \( \{f_{xxx},\Delta_{xxx}\} \) is linearly dependent.

\item  \label{iiabc} If \(f_{xxx}=a_xb_xc_x\), then 
\(  F_{6u}=[abu]^2[bcu]^2[cau]^2 \), 
 \( \Delta_{xxx}=[abc]^2\, a_xb_xc_x  \),  
\( S=[abc] ^4 \) and
\( T=[abc] ^6  \).

\item \label{cub2} If \( a_x \) is nonzero, then  \( \J3[f_{xxx},a_x^3,u_x^3]=0 \) if and only if \( a_x \) divides \( f_{xxx}\).

\item  \label{cub4} If \( \{a_x,b_x\} \) is linearly independent, then \( \J3[f_{xxx},a_x^2b_x,u_x^3]=0 \) if and only if \( f_{xxx}=  a_x^2 c^{\phantom{2}}_x + b^{\phantom{2}}_0 b_x^3\) for some  form \( c_x \) and \( b_0\in \C \).

\item  \label{cub6} If \( \{a_x,b_x,c_x\} \) is linearly independent, then  \( \J3[f_{xxx},a_xb_xc_x,u_x^3]=0 \) if and only if  \( f_{xxx}=a_0 a_x^3+ b_0 b_x^3+c_0 c_x^3+d_0 a_x b_x c_x \) for some \( a_0,b_0,c_0,d_0\in \C \).

\item \label{cub5} \( \J3[f_{xxx},\Delta_{xxx},u_x^3]=0 \)

\item \label{cub7} If \( \{a_x,b_x,c_x\} \) is linearly independent, then \[ \B_3=\{a_x^3,b_x^3,c_x^3,a_x^2 b_x,a_x^2 c_x , b_x^2 a_x,b_x^2 c_x, c_x^2 a_x, c_x^2 b_x,a_x b_x c_x\} \] is a basis for the vector space of cubic forms.
\end{enumerate}
\end{lemma}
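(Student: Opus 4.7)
The plan is to verify each clause of Lemma~\ref{lem22} by appealing to the corresponding result in \cite{brookfield2021} (as the preamble to the lemma announces), with a few clauses needing an independent argument. Clauses \eqref{cub1}, \eqref{cub8}, \eqref{cuc1}, \eqref{cub3}, \eqref{iiabc}, \eqref{cub2}, and \eqref{cub5} are direct translations, up to cosmetic relabeling of transvectants and normalization constants, of lemmas already proved in the companion paper, so the task there is simply to identify each clause with its source. Clause \eqref{cub5} in particular encodes the classical identity $\J3[f_{xxx},\Delta_{xxx},u_x^3]=0$, which is universal in the coefficients of $f_{xxx}$; if one wanted a self-contained check, it could be verified on a Zariski-dense subset such as the forms $a_x^3+b_x^3+c_x^3+d_x^3$ from \lref{lem227}\eqref{iifour}, where $\Delta_{xxx}$ and the transvectant are both computable explicitly.

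For clauses \eqref{cub4} and \eqref{cub6} I would expand a generic $f_{xxx}$ in the basis $\B_3$ supplied by clause \eqref{cub7}, compute the third transvectant of each basis element against the probe form ($a_x^2b_x$ or $a_xb_xc_x$ respectively), and then read off which coefficient patterns in $f_{xxx}$ can survive the vanishing hypothesis. The reverse direction, that the asserted special forms actually produce $\J3=0$, reduces to clause \eqref{cub2} applied to the individual linear factors, combined with linearity of the transvectant in its first argument.

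Clause \eqref{cub7} is pure linear algebra. The space of ternary cubic forms has dimension $\binom{5}{2}=10=|\B_3|$, and the substitution $x_1\mapsto a_x$, $x_2\mapsto b_x$, $x_3\mapsto c_x$ is an invertible linear endomorphism of the space of linear forms precisely because $\{a_x,b_x,c_x\}$ is linearly independent. The induced map on cubic forms is therefore also invertible and sends the standard monomial basis bijectively onto $\B_3$, so $\B_3$ is a basis.

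The main obstacle here is bookkeeping rather than any single hard computation: the normalization constants in \eqref{eq837} fix particular scalar multiples of the transvectants, and matching these faithfully against the statements in \cite{brookfield2021} is where a slip would be most likely to propagate through the later rank arguments that depend on this lemma.
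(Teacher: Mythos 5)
Your proposal matches the paper's approach: the paper proves this lemma entirely by citation, pointing to Theorem~9.2, Lemma~5.2(C3), Lemma~5.5, Lemma~5.4 and Sections~5 and~7 of \cite{brookfield2021}, which is exactly your primary route. Your supplementary self-contained sketches go beyond what the paper records, but they are not needed and do not change the substance of the argument.
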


\begin{proof}
See Theorem~9.2, Lemma~5.2(C3), Lemma~5.5, Lemma~5.4 and Sections~5 and~7 of~\cite{brookfield2021}.
\end{proof}

From \upitem1--\upitem3 of this lemma we have the implications
\begin{equation}\label{eq33}
f_{xxx} =0\Rightarrow \theta_{uuxx}=0 \Rightarrow F_{6u}=0 \Rightarrow (\Delta_{xxx}=0 \Leftrightarrow S_{uuu}=0)
\end{equation}

\new For a cubic form \( f_{xxx} \), its Hessian \( \Delta_{xxx} \) is also a cubic form, so  has its own \( \theta_{uuxx} \), \( \Delta_{xxx} \) and \( F_{6u} \) concomitants. We will denote these by \( \btheta_{uuxx} \), \( \bDelta_{xxx} \) and \( \bF_{6u} \), that is, 
\[  \btheta_{uuxx}=\Vert\theta_{uuxx}\Vert_{f\mapsto \Delta} \qquad  \bDelta_{xxx}  =\Vert \Delta_{xxx}\Vert_{f\mapsto \Delta} \qquad \bF_{6u} =\Vert F_{6u}\Vert_{f\mapsto \Delta} \] 
These concomitants of \( \Delta_{xxx} \) are related to the concomitants of \( f_{xxx} \) by
\begin{align}
9 \, \btheta_{uuxx}&=
  8 \con_{ux}[T_{uuu} f_{xxx}]- 2 T u_x^2 -  S \theta_{uuxx } \label{eq2143}\\
 \bDelta_{xxx} & =3S^2 f_{xxx}-2 T \Delta_{xxx}\label{eq2144}\\
\bF_{6u}  &=12 S\, T_{uuu} S_{uuu} - 3 S^2 F_{6u}  - 8 T S_{uuu}^2 \label{eq2145}
\end{align}
Though we do not  need this fact, we mention that the  \( S_{uuu} \), \( T_{uuu} \), \( S \) and \( T \) concomitants of \( \Delta_{xxx} \) are related to the concomitants of \( f_{xxx} \) by
\begin{align*}
 \bS&= 4 T^2 - 3 S^3 &
 \bS_{uuu}  &=4 T S_{uuu}- 3 S T_{uuu}  \\
 \bT&= T (9 S^3 - 8 T^2) &
 \bT_{uuu}&= 6  S T  T_{uuu} - (8 T^2 - 3 S^3 ) S_{uuu}
\end{align*}
The implications in \eqref{eq33} when applied to \( \Delta_{xxx} \) become
\begin{equation}\label{eq38}
\Delta_{xxx} =0\Rightarrow \btheta_{uuxx}=0 \Rightarrow \bF_{6u}=0 \Rightarrow (\bDelta_{xxx}=0 \Leftrightarrow \bS_{uuu}=0)
\end{equation}

\new In the next two lemmas we prove other important relationships between the  concomitants of \( f_{xxx} \) and those of \( \Delta_{xxx} \).
\begin{lemma}\label{lem2139}
For a cubic form \(f_{xxx}\) the following are equivalent:
\begin{enumerate}
\item \(\bF_{6u} =0\)

\item \( \bDelta_{xxx}  =0\)

\item \(S=T=0\)
\end{enumerate}
\end{lemma}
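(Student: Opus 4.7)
The plan is to establish the cycle $(3)\Rightarrow(1)\Rightarrow(2)\Rightarrow(3)$, drawing on the identities \eqref{eq2144} and \eqref{eq2145} together with \lref{lem22}.

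The implication $(3)\Rightarrow(1)$ is immediate: if $S=T=0$, then every term on the right-hand side of \eqref{eq2145} vanishes, so $\bF_{6u}=0$.

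For $(1)\Rightarrow(2)$, I would apply \lref{lem22}\eqref{cub8} to the cubic form $\Delta_{xxx}$: the hypothesis $\bF_{6u}=0$ says precisely that $\Delta_{xxx}=a_x^2 b_x$ for some linear forms $a_x,b_x$. But any cubic of this shape has vanishing Hessian by \lref{lem22}\eqref{cuc1}, so $\bDelta_{xxx}=0$.

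The main work is $(2)\Rightarrow(3)$, which uses identity \eqref{eq2144}: the hypothesis $\bDelta_{xxx}=0$ reads $3S^2 f_{xxx}=2T\Delta_{xxx}$. I dispose of the easy subcases first. If $S=0$, the equation forces $T\Delta_{xxx}=0$, and either $T=0$ directly or $\Delta_{xxx}=0$, in which case $T_{uuu}=0$ from its definition in \eqref{eq837}, giving $T=0$. The remaining case $S\neq 0$ forces $T\neq 0$ (otherwise $f_{xxx}=0$, contradicting $S\neq 0$), whence $f_{xxx}$ and $\Delta_{xxx}$ are proportional. By \lref{lem22}\eqref{cub3}, $f_{xxx}$ is completely reducible, say $f_{xxx}=a_x b_x c_x$, and \lref{lem22}\eqref{iiabc} gives $S=[abc]^4$, $T=[abc]^6$, and $\Delta_{xxx}=[abc]^2 f_{xxx}$. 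Substituting back into \eqref{eq2144} yields $\bDelta_{xxx}=[abc]^8 f_{xxx}$; setting this equal to zero forces $[abc]=0$ (since $f_{xxx}\neq 0$), contradicting $S=[abc]^4\neq 0$.

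The main obstacle will be handling the subcase $S\neq 0$ cleanly; it is the step that genuinely requires the completely reducible description from \lref{lem22}\eqref{cub3} and the explicit invariant formulas in \lref{lem22}\eqref{iiabc}, whereas the rest of the argument is a direct invocation of the identities.
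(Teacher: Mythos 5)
Your proof is correct and uses the same essential ingredients as the paper's: \eqref{eq2145} for $(3)\Rightarrow(1)$, the chain of implications for the Hessian (your \lref{lem22}\eqref{cub8}+\eqref{cuc1} argument is exactly the content of \eqref{eq38}) for $(1)\Rightarrow(2)$, and \eqref{eq2144} together with \lref{lem22}\eqref{cub3} and \eqref{iiabc} for $(2)\Rightarrow(3)$. The only difference is organizational: the paper splits $(2)\Rightarrow(3)$ according to whether $f_{xxx}$ is completely reducible, whereas you split on $S=0$ versus $S\neq0$ and dispose of the latter by contradiction; both reduce to the same dichotomy about linear dependence of $\{f_{xxx},\Delta_{xxx}\}$.
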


\begin{proof}

(1)\( \Rightarrow \)(2): This follows from \eqref{eq38}.

\noindent(2)\( \Rightarrow \)(3):  There are two cases: If \( f_{xxx}\) is not completely reducible, then \(\{f_{xxx},\Delta_{xxx}\}\) is linearly independent by \lref{lem22}\eqref{cub3}. So, if  \( \bDelta_{xxx}  =0\), then  \(S=T=0\) follows from \eqref{eq2144}. 

Otherwise, suppose that  \(f_{xxx}\) is completely reducible, that is, \(f_{xxx}=a_xb_xc_x\) for some linear forms \( a_x \), \( b_x \) and \( c_x \).  Then, by \lref{lem22}\eqref{iiabc},  \(\Delta_{xxx}=[abc]^2 a_xb_xc_x\) and \(  \bDelta_{xxx}  = [abc]^4a_xb_xc_x\). If \(\bDelta_{xxx}  =0\), then \([abc]=0\) and so \(S=T=0\) follows from \lref{lem22}\eqref{iiabc}.

\noindent(3)\( \Rightarrow \)(1): This follows directly from \eqref{eq2145}.

\end{proof}

\begin{lemma}\label{lem2555}
For a cubic form \(f_{xxx}\) the following are equivalent:
\begin{enumerate}
\item \(T_{uuu} =0\)

\item \( \btheta_{uuxx}  =0\)

\end{enumerate}
\end{lemma}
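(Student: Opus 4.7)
The plan is to use identity~\eqref{eq2143},
\[ 9\,\btheta_{uuxx} = 8\,\con_{ux}[T_{uuu}\,f_{xxx}] - 2T\,u_x^2 - S\,\theta_{uuxx}, \]
as the principal bridge, together with \lref{lem2139} and the auxiliary identity $\bS_{uuu}=4T\,S_{uuu}-3S\,T_{uuu}$ recorded just above~\eqref{eq38}.

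For the direction (1)$\Rightarrow$(2), suppose $T_{uuu}=0$. The defining substitution $u^3\mapsto f$ immediately forces $T=0$, and then the auxiliary identity produces $\bS_{uuu}=0$. Applying \lref{lem22}\eqref{cuc1} to the cubic $\Delta_{xxx}$ converts $\bS_{uuu}=0$ into $\bDelta_{xxx}=0$, whence \lref{lem2139} yields $S=0$. Substituting $T_{uuu}=0$, $T=0$ and $S=0$ into \eqref{eq2143} leaves $9\,\btheta_{uuxx}=0$, as desired.

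For the direction (2)$\Rightarrow$(1), suppose $\btheta_{uuxx}=0$. By \lref{lem22}\eqref{cub1} applied to the cubic $\Delta_{xxx}$, this says $\Delta_{xxx}=e_x^3$ for some linear form $e_x$ (possibly zero), and then \lref{lem227}\eqref{iione} applied to $\Delta_{xxx}$ gives $\bDelta_{xxx}=0$. \lref{lem2139} now provides $S=T=0$, so substituting into \eqref{eq2143} reduces the identity to $\con_{ux}[T_{uuu}\,f_{xxx}]=0$. Since $T_{uuu}$ depends only on $\uu$ and $f_{xxx}$ only on $\xx$, this unpacks to the bi-form identity
\[ \sum_{i=1}^{3}\frac{\partial T_{uuu}}{\partial u_i}\,\frac{\partial f_{xxx}}{\partial x_i}=0. \]
I would finish by splitting on $\Delta_{xxx}$. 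If $\Delta_{xxx}=0$, then $T_{uuu}=0$ follows directly from the defining transvectant of $T_{uuu}$, which carries $\Delta_{xxx}\,u_x$ as an argument. If $\Delta_{xxx}\neq 0$, the three quadratic forms $\partial_{x_i}f_{xxx}$ are linearly independent over $\C$: a nontrivial relation $\sum_i \alpha_i\,\partial_{x_i}f_{xxx}=0$ would, after a linear change of variables, force $f_{xxx}$ to be effectively a binary cubic, hence of the form $a_xb_xc_x$ with $\{a_x,b_x,c_x\}$ linearly dependent, contradicting $\Delta_{xxx}\neq 0$ by \lref{lem22}\eqref{cuc1}. Matching coefficients of the quadratic monomials in $\uu$ in the displayed bi-form identity then forces $\partial_{u_i}T_{uuu}=0$ for $i=1,2,3$, and Euler's relation on the cubic $T_{uuu}$ delivers $T_{uuu}=0$.

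The chief obstacle is the final linear-independence step: extracting the full vanishing of $T_{uuu}$ from the single contraction $\con_{ux}[T_{uuu}\,f_{xxx}]=0$ requires handling the degenerate case $\Delta_{xxx}=0$ separately and, in the nondegenerate case, invoking that a nonzero Hessian forces the first partials of $f_{xxx}$ to be independent over $\C$.
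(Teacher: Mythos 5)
Your proof is correct, but the closing step of (2)$\Rightarrow$(1) follows a genuinely different route from the paper's. Both arguments agree up to the point where $S=T=0$ and \eqref{eq2143} reduce the hypothesis to $\con_{ux}[T_{uuu}\,f_{xxx}]=0$. The paper finishes with two further transvectant identities, $\J2[\theta_{uuxx},\con_{ux}[f_{xxx}T_{uuu}],u_x^2]=48\,S_{uuu}T_{uuu}$ and $12\,T_{uuu}=\con_{ux}^2[S_{uuu}\theta_{uuxx}]$, deducing $S_{uuu}T_{uuu}=0$ and then $T_{uuu}=0$ in either case. You instead exploit the separation of variables in the contraction, $\con_{ux}[T_{uuu}\,f_{xxx}]=\sum_i(\partial_{u_i}T_{uuu})(\partial_{x_i}f_{xxx})$, split on whether $\Delta_{xxx}$ vanishes (the degenerate case killing $T_{uuu}$ directly in its defining transvectant, which has $\Delta_{xxx}u_x$ as an argument), and in the nondegenerate case use the linear independence of the first partials of $f_{xxx}$ --- correctly justified via \lref{lem22}\eqref{cuc1}, since a dependence among the partials would make $f_{xxx}$ a binary cubic and hence force $\Delta_{xxx}=0$ --- together with Euler's relation. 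Your route avoids invoking two additional unproved computational identities at the price of an elementary linear-algebra argument; the paper's route stays entirely inside the symbolic calculus of concomitants. Your direction (1)$\Rightarrow$(2) also differs mildly: you reach $S=T=0$ via $T=\Vert T_{uuu}\Vert_{u^3\mapsto f}$, the identity $\bS_{uuu}=4T\,S_{uuu}-3S\,T_{uuu}$, and \lref{lem2139}, whereas the paper uses the contractions $6\,T=\con_{ux}^3[T_{uuu}\,f_{xxx}]$ and $6\,S^2=\con_{ux}^3[T_{uuu}\,\Delta_{xxx}]$; both are legitimate given the identities the paper records.
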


\begin{proof}
(1)\( \Rightarrow \)(2):  If  \(T_{uuu} =0\), then the identities
 \[ 6 \,T=\con_{ux}^3[T_{uuu} f_{xxx}]  \qquad 6 S^2= \con_{ux}^3[T_{uuu} \Delta_{xxx}] \]
imply  \( S=T=0 \) and then \eqref{eq2143} gives \( \btheta_{uuxx} =0\).

(2)\( \Rightarrow \)(1): If \( \btheta_{uuxx}  =0 \), then  \( \bDelta_{xxx}  =0\) follows from \eqref{eq38},  \( S=T=0 \) follows from \lref{lem2139}, and \( \con_{ux}[T_{uuu} f_{xxx}]=0 \) follows  from \eqref{eq2143}. From the identity 
\[  \J2[\theta_{uuxx}, \con_{ux}[f_{xxx} T_{uuu}], u_x^2] = 48 S_{uuu} T_{uuu} \] 
we get \( S_{uuu}T_{uuu}=0 \), so either  \( S_{uuu}=0 \) or \( T_{uuu}=0 \).  Because of the identity \[ 12 T_{uuu} = \con_{ux}^2[S_{uuu} \theta_{uuxx}] ,\] we have \( T_{uuu}=0 \) in both cases. 
\end{proof}

Combining the previous two lemmas with \eqref{eq33}  and \eqref{eq38},  we get the following implications:
\begin{multline}\label{eq34}
f_{xxx} =0\Rightarrow \theta_{uuxx}=0 \Rightarrow F_{6u}=0 \Rightarrow (\Delta_{xxx}=0 \Leftrightarrow S_{uuu}=0)\\ \Rightarrow (\btheta_{uuxx}=0 \Leftrightarrow T_{uuu}=0)\Rightarrow (\bF_{6u}=0 \Leftrightarrow \bDelta_{xxx}=0 \Leftrightarrow S=T=0)
\end{multline}
 
 \new There are two  types of cubic forms with geometric significance that are of particular interest for this article. Suppose that \( \{a_x,b_x,c_x\} \) are linearly independent  forms. 
\begin{itemize}
\item If \(f_{xxx}=a_x(a_x c_x+ b_x^2)  \), then the equation \( f_{xxx}=0 \) represents  an irreducible conic \( a_x c_x+ b_x^2 =0\) and with a tangent line \( a_x =0 \). The point of tangency is the intersection of the lines \( a_x=0 \) and \( b_x=0 \). See Lemmas~\ref{lemx66} and~\ref{lemx77}.

\item If \( f_{xxx}=a_x^2 c_x+ b_x^3 \), then the equation \( f_{xxx}=0 \) represents  an irreducible cubic curve with a cusp at the intersection of the lines \( a_x=0 \) and \( b_x=0 \). 
\end{itemize} 

The values of the concomitants for forms of these types are provided by the following lemma.

\begin{lemma}\label{lem228}
Let \(a_x\), \(b_x\) and \(c_x\) be  linear forms.

\begin{enumerate}
\item  \label{iitang} If \(f_{xxx}=a_x(a_x c_x+ b_x^2)  \), then 
\[ \Delta_{xxx}= -4 [abc]^2 a_x^3 \qquad S_{uuu}= - 2[abc] [acu]^3 \quad T_{uuu}=S=T= \btheta_{uuxx}=0\] 

\item  \label{iicusp} If \( f_{xxx}=a_x^2 c_x+ b_x^3 \), then 
\[ \Delta_{xxx}= -12  [abc]^2 a_x^2 b_x \qquad S=T=0 \qquad \btheta_{uuxx}=-576  [abc]^4 [abu]^2 a_x^2\] 
\end{enumerate}
\end{lemma}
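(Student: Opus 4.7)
Each equation in the statement is a polynomial identity in the nine coefficients of \(a_x\), \(b_x\), \(c_x\) and in \(\uu\), so by Zariski density it suffices to verify them assuming \(\{a_x, b_x, c_x\}\) is linearly independent. Under that assumption, I would perform the invertible linear substitution \(a_x \mapsto x_1\), \(b_x \mapsto x_2\), \(c_x \mapsto x_3\) (of Jacobian \([abc]\)) and compute the concomitants directly from~\eqref{eq837} on the two normal forms
\[ g^{(1)}_{xxx} = x_1^2 x_3 + x_1 x_2^2 \qquad\text{and}\qquad g^{(2)}_{xxx} = x_1^2 x_3 + x_2^3. \]
Reversing the substitution then recovers the claimed formulas, each factor \([abc]\) accounting for one unit of weight in the covariance law of the concomitant involved.

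For Part~\eqref{iitang}, the direct calculation on \(g^{(1)}_{xxx}\) should yield concomitants whose covariant extensions match the stated expressions. The vanishing of \(T_{uuu}\), \(S\) and \(T\) in canonical coordinates persists under the substitution, and the final assertion \(\btheta_{uuxx}=0\) follows at once from \(T_{uuu}=0\) via \lref{lem2555}; equivalently, since \(\Delta_{xxx} = -4[abc]^2 a_x^3\) is a cube of a linear form, \lref{lem22}\eqref{cub1} applied to \(\Delta_{xxx}\) in place of \(f_{xxx}\) also gives \(\btheta_{uuxx}=0\).

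For Part~\eqref{iicusp}, the analogous calculation on \(g^{(2)}_{xxx}\) produces \(\Delta_{xxx}\) proportional to \(x_1^2 x_2\) and \(S=T=0\), which transforms back to \(\Delta_{xxx} = -12[abc]^2 a_x^2 b_x\) and \(S=T=0\) in general. For \(\btheta_{uuxx}\) I would exploit that \(\theta_{uuxx}\) is quadratic in the coefficients of its input cubic, so
\[ \btheta_{uuxx} = \bigl(-12[abc]^2\bigr)^2 \theta_{uuxx}\bigl(a_x^2 b_x\bigr) = 144\,[abc]^4\, \theta_{uuxx}\bigl(a_x^2 b_x\bigr), \]
and then compute \(\theta_{uuxx}(a_x^2 b_x)\) directly in canonical coordinates \(a_x = x_1\), \(b_x = x_2\); the expected value \(-4[abu]^2 a_x^2\) yields \(\btheta_{uuxx} = -576[abc]^4 [abu]^2 a_x^2\). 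The main obstacle throughout is careful bookkeeping of the scalar factors inherent in the transvectant and contraction definitions; a useful sanity check is that the computed values must be consistent with the identities \eqref{eq2143}--\eqref{eq2145} and with the implication chain \eqref{eq34}.
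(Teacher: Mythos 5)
The paper gives no proof of this lemma---it is asserted as a direct transvectant calculation---and your plan (reduce to the canonical forms \(x_1^2x_3+x_1x_2^2\) and \(x_1^2x_3+x_2^3\) by Zariski density and covariance, then compute from \eqref{eq837}) is essentially that intended argument, with the actual evaluation of \(S_{uuu}\), \(T_{uuu}\), \(S\), \(T\) on the normal forms left as the routine but substantive step. Your two shortcuts check out and are non-circular: in part \eqref{iitang}, \(\Delta_{xxx}=-4[abc]^2a_x^3\) is a cube, so \lref{lem22}\eqref{cub1} applied to \(\Delta_{xxx}\) gives \(\btheta_{uuxx}=0\) (and \lref{lem2555} is established independently of this lemma, so that route is also legitimate); in part \eqref{iicusp}, the quadratic homogeneity of \(\theta_{uuxx}\) in its input cubic together with the value \(\theta_{uuxx}=-4[abu]^2a_x^2\) for \(a_x^2b_x\) (recorded in the proof of \lref{lem32}) yields \((-12[abc]^2)^2\cdot(-4[abu]^2a_x^2)=-576[abc]^4[abu]^2a_x^2\), exactly as claimed.
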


\section{The Rank of Cubic Forms}\label{sectmain}

The goal of this section is to provide conditions on the concomitants of a cubic form \( f_{xxx} \) that determine its  rank.  The table below summarizes these results.

 \[ \def\strut{\vrule height 11pt width 0pt depth 6pt}
\begin{array}{|c|c|l|c|}\hline
\strut   \text{Condition} & \text{Rank} & \hfil f_{xxx}  & \text{Lemma}\\\hline
 \strut  f_{xxx}=0 & 0 & 0 &\\\hline
\strut   \theta_{uuxx}=0 &1 & a_x^3 \text{ with }   a_x\neq 0 & \ref{lem31} \\\hline
\strut   F_{6u}=0 & 3 & a_x^2 b_x   \text{ with } \{a_x,b_x\} \text{ independent}& \ref{lem32} \\\hline
\vrule height 16pt width 0pt depth 11pt   \parbox{50pt}{\centering\(\Delta_{xxx}=0\) \( S_{uuu}=0 \)  } & 2 & a_x^3+  b_x^3  \text{ with }  \{a_x,b_x\} \text{ independent}& \ref{lem33}\\\hline
\vrule height 16pt width 0pt depth 11pt   \parbox{50pt}{\centering\(\btheta_{uuxx}=0\)  \( T_{uuu}=0 \)  }  & 5 & a_x(a_x c_x+ b_x^2  )   \text{ with }   \{a_x,b_x,c_x\} \text{ independent}& \ref{lem34} \\\hline
\vrule height 22pt width 0pt depth 16pt   \parbox{50pt}{\centering\(\bF_{6u} =0\)  \( \bDelta_{xxx}=0 \)  \( S=T=0\) }& 4 & a_x^2 c^{\phantom{2}}_x + b_x^3  \text{ with }  \{a_x,b_x,c_x\} \text{ independent}& \ref{lem35}\\\hline
\strut   S =0& 3 &  a_x^3+ b_x^3+c_x^3  \text{ with }  \{a_x,b_x,c_x\} \text{ independent}& \ref{lem36}\\\hline
\vrule height 17pt width 0pt depth 12pt   S\neq 0 & 4 &\parbox{2.5in}{\(  a_x^3+  b_x^3+c_x^3+d_x^3  \) with \(  \{a_x,b_x,c_x,d_x\}  \) \\triplewise independent} & \ref{lem37} \\\hline 
\end{array}
 \] 
 
Where there is more than one condition in an entry in the first column, the conditions are equivalent (Lemmas~\ref{lem22}\eqref{cuc1}, \ref{lem2139} and \ref{lem2555}). 
The condition in each row of the table goes together with the negation of the condition in the row above. For example the fifth row of the table could be read as, \( \Delta_{xxx}= 0 \) and \( F_{6u}\neq 0 \) if and only if \(f_{xxx}= a_x^3+  b_x^3 \) for some linearly independent  forms \( \{a_x,b_x\} \), and in this circumstance, \( \rank[f_{xxx}]=2 \) (\lref{lem33}).   The implications in \eqref{eq34} ensure that we are not missing any special cases---all cubic forms are described by entries in this table.

\begin{lemma}\label{lem31}
For a cubic form \(f_{xxx}\), the following are equivalent:
\begin{enumerate}
\item \(\theta_{uuxx}=0\) and  \(f_{xxx} \neq 0\).

\item \(f_{xxx}=a_x^3\) for some nonzero  form \(a_x\).

\item  \(\rank[f_{xxx}]=1 \)
\end{enumerate}
\end{lemma}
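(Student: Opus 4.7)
The plan is to prove the three-way equivalence by the cycle $(1)\Rightarrow(2)\Rightarrow(3)\Rightarrow(1)$, essentially just bookkeeping on top of \lref{lem22}\eqref{cub1} and \lref{lem227}\eqref{iione}. None of the steps requires real computation; the content is already in the earlier lemmas.

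For $(1)\Rightarrow(2)$, I would apply \lref{lem22}\eqref{cub1} directly: the hypothesis $\theta_{uuxx}=0$ gives $f_{xxx}=a_x^3$ for some linear form $a_x$, and the extra hypothesis $f_{xxx}\neq 0$ forces $a_x\neq 0$. For $(2)\Rightarrow(3)$, the decomposition $f_{xxx}=a_x^3$ exhibits $f_{xxx}$ as a sum of one cube of a nonzero linear form, so $\rank[f_{xxx}]\leq 1$; combined with $f_{xxx}\neq 0$, which rules out $\rank=0$, we get $\rank[f_{xxx}]=1$. For $(3)\Rightarrow(1)$, the very definition of Waring rank gives $f_{xxx}=a_x^3$ for some nonzero $a_x$; then \lref{lem227}\eqref{iione} (or equivalently the forward direction of \lref{lem22}\eqref{cub1}) yields $\theta_{uuxx}=0$, and $f_{xxx}=a_x^3\neq 0$ completes (1).

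Since everything reduces to a single application of \lref{lem22}\eqref{cub1} in each direction, there is no genuine obstacle. The only small care needed is to keep track of the non-vanishing hypothesis on $f_{xxx}$ (equivalently, on $a_x$), which distinguishes rank $1$ from rank $0$ and is what makes the statement a genuine rank characterization rather than the weaker form stated in \lref{lem22}\eqref{cub1}.
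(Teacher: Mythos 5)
Your proposal is correct and is essentially the paper's own argument: the paper simply says the lemma is immediate from \lref{lem22}\eqref{cub1}, and your cycle $(1)\Rightarrow(2)\Rightarrow(3)\Rightarrow(1)$ just makes explicit the routine bookkeeping (tracking the nonvanishing of $f_{xxx}$, i.e.\ of $a_x$, to separate rank $1$ from rank $0$) that the paper leaves implicit. No difference in substance.
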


\begin{proof}
This is immediate from \lref{lem22}\eqref{cub1}.
\end{proof}

\begin{lemma}\label{lem33}
For a cubic form \(f_{xxx}\), the following are equivalent:
\begin{enumerate}
\item  \(\Delta_{xxx}=0\) and \(F_{6u} \neq 0\).

\item \(f_{xxx}=a_x^3+ b_x^3\) for some linearly independent  forms  \(\{a_x,b_x\}\).

\item  \(f_{xxx}=a_xb_xc_x  \) for some linearly dependent, but pairwise independent forms \(\{a_x,b_x,c_x\}\).

\item  \(\rank[f_{xxx}]=2 \).
\end{enumerate}
\end{lemma}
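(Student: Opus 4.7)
The plan is to prove the four conditions equivalent via the cycle $(1)\Rightarrow(2)\Rightarrow(3)\Rightarrow(1)$, plus the separate equivalence $(2)\Leftrightarrow(4)$. Each step should reduce to an application of \lref{lem22}, \lref{lem227}, or \lref{lem2145}, so the work is essentially organizational.

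For $(1)\Rightarrow(2)$, I would invoke \lref{lem22}\eqref{cuc1}: the assumption $\Delta_{xxx}=0$ forces $f_{xxx}$ to be either $a_x^3+b_x^3$ or $a_x^2b_x$ for some linear forms. The monomial case is immediately ruled out by \lref{lem22}\eqref{cub8}, which gives $F_{6u}=0$, contrary to hypothesis. In the remaining case, if $\{a_x,b_x\}$ were linearly dependent then $f_{xxx}$ would collapse to a scalar multiple of a single cube, so $\theta_{uuxx}=0$ and hence $F_{6u}=0$ by the implications in \eqref{eq33}---again a contradiction. Thus $\{a_x,b_x\}$ must be independent, giving (2).

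For $(2)\Rightarrow(3)$, I would use the classical factorization over $\C$,
\[ a_x^3+b_x^3 = (a_x+b_x)(a_x+\omega b_x)(a_x+\omega^2 b_x), \]
where $\omega$ is a primitive cube root of unity. All three factors lie in the two-dimensional span of $\{a_x,b_x\}$, so the triple is linearly dependent; yet no two factors are proportional, because their $b_x$-coefficients are distinct cube roots of unity. For $(3)\Rightarrow(1)$, I would plug directly into \lref{lem22}\eqref{iiabc}: linear dependence gives $[abc]=0$, hence $\Delta_{xxx}=[abc]^2\,a_xb_xc_x=0$; pairwise independence guarantees that each of $[abu]$, $[bcu]$, $[cau]$ is a nonzero linear form in $\uu$, so $F_{6u}=[abu]^2[bcu]^2[cau]^2\neq 0$.

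Finally, for $(2)\Leftrightarrow(4)$, the explicit expression in (2) shows $\rank[f_{xxx}]\leq 2$, while \lref{lem227}\eqref{iitwo} gives $\theta_{uuxx}=36[abu]^2 a_xb_x\neq 0$ (using independence of $\{a_x,b_x\}$ to ensure $[abu]\neq 0$), so \lref{lem2145}(1) yields $\rank[f_{xxx}]\geq 2$. The converse is clear, because rank $2$ means $f_{xxx}=a_x^3+b_x^3$ and collapsing to a dependent pair would give a single cube, forcing rank at most $1$. No step is genuinely hard---the cube-roots-of-unity factorization in $(2)\Rightarrow(3)$ is the only actual computation, and every structural input is already recorded earlier in the paper.
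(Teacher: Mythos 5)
Your proof is correct, and its logical skeleton differs from the paper's in one genuine way: you establish $(2)\Rightarrow(3)$ directly by the explicit factorization
\[ a_x^3+b_x^3=(a_x+b_x)(a_x+\omega b_x)(a_x+\omega^2 b_x), \]
checking dependence (three forms in a two-dimensional span) and pairwise independence (distinct $b_x$-coefficients $1,\omega,\omega^2$) by hand, whereas the paper never connects (2) and (3) directly: it pivots everything through (1), obtaining the product representation existentially from \lref{lem22}\eqref{cuc1} and then reading off pairwise independence from the formula $F_{6u}=[abu]^2[bcu]^2[cau]^2$ in \lref{lem22}\eqref{iiabc}. Your route is more constructive---it exhibits the three linear factors explicitly in terms of $a_x$ and $b_x$---while the paper's stays uniformly inside the concomitant calculus and avoids introducing roots of unity (which the paper only brings in later, in \eref{ex49}). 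Two smaller variations: in $(1)\Rightarrow(2)$ you rule out the dependent case by collapsing to a single cube and chasing $\theta_{uuxx}=0\Rightarrow F_{6u}=0$ through \eqref{eq33}, where the paper reads $[abu]\neq 0$ directly off $F_{6u}=-27[abu]^6$ from \lref{lem227}\eqref{iitwo}; and for $\rank[f_{xxx}]\geq 2$ you use $\theta_{uuxx}\neq 0$ with \lref{lem2145}(1), where the paper uses $F_{6u}\neq 0$ against \lref{lem227}\eqref{iione}. All of these are sound; the only real trade-off is that your cycle makes the product decomposition explicit at the cost of one computation the paper's argument does not need.
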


\begin{proof}
(1)\( \Rightarrow \)(2): From \lref{lem22}\eqref{cub8} and \eqref{cuc1} we have \(f_{xxx}=a_x^3+ b_x^3\) for some   forms  \( a_x \) and \( b_x \). From \lref{lem227}\eqref{iitwo}, \( F_{6u}\neq0 \) implies that \( [abu]\neq 0 \) and so \(\{a_x,b_x\}\) is linearly independent.

(2)\( \Rightarrow \)(1): This follows directly from \lref{lem227}\eqref{iitwo}. In particular, since \(\{a_x,b_x\}\) is linearly independent, we have \( [abu]\neq 0 \) and so  \(F_{6u} \neq 0\). 

(1)\( \Rightarrow \)(3):   From \lref{lem22}\eqref{cuc1} we have  \( f_{xxx}=a_xb_xc_x  \)  for some  linearly dependent  forms \( \{a_x,b_x,c_x\} \). Because \( F_{6u} \neq 0\), \lref{lem22}\eqref{iiabc} implies that \( [abu] \), \( [bcu] \) and \( [cau] \) are all nonzero. Hence \(\{a_x,b_x,c_x\}\) is pairwise independent.

(3)\( \Rightarrow \)(1): This follows  from \lref{lem22}\eqref{iiabc}. Since  \( \{a_x,b_x,c_x\} \) is dependent, we have \( [abc]=0 \) and hence \( \Delta_{xxx}=0 \). Also, since \(\{a_x,b_x,c_x\}\) is pairwise independent, \( [abu] \), \( [bcu] \) and \( [cau] \) are all nonzero and so  \(F_{6u} \neq 0\). 

(1,2)\( \Rightarrow \)(4):  The rank of \( f_{xxx} \) cannot be one because \( F_{6u}= 0 \) for such forms by \lref{lem227}\eqref{iione}. Since \( f_{xxx} \) is a sum of two cubes, we have \( \rank[f_{xxx}]=2 \).

(4)\( \Rightarrow \)(2): Since \( \rank[f_{xxx}]=2 \),  \(f_{xxx}=a_x^3+ b_x^3\) for some  forms  \(\{a_x,b_x\}\). These  forms must be independent, since otherwise the two terms could be combined and \( f_{xxx} \) would have rank one.
\end{proof}

Applying  \lref{lem33}  to the cubic form \( \Delta_{xxx} \), we see that \( \Delta_{xxx} \) can never have  rank two.  If that were possible, from \lref{lem33}, we would have \( \bDelta_{xxx}=0 \) and \( \bF_{6u}\neq 0 \), contradicting \lref{lem2139}. In other words,  no rank two cubic form is  the Hessian of another cubic form. This is contrary to the claim made by many 19th century mathematicians that each cubic form is the Hessian of three other cubic forms \cite[p.~153]{salmon1879}.

\new If \( f_{xxx} \)  has rank 2, then  \(f_{xxx}=a_x^3+ b_x^3\) for some linearly independent  forms  \(\{a_x,b_x\}\), and, from \lref{lem227}\eqref{iitwo}, \( \theta_{uuxx}=36   [abu]^2 a_xb_x \).  The uniqueness of factorizations means that \( a_x \) and \( b_x \) are uniquely determined by \( f_{xxx} \) up to order and scalar multiplication. This also shows how to express any rank two cubic form as a sum of two cubes.

\begin{method}\label{meth23}
Suppose that \( f_{xxx} \) is a cubic form with rank two. Fix \( (\uu)\in \C^3 \) so that  \( \theta_{uuxx}\) is  nonzero. Write \( \theta_{uuxx}=a_xb_x \) for some linearly independent  forms \( \{a_x,b_x\} \).  Then  \(f_{xxx}=a_0a_x^3+b_0b_x^3 \) for some \( a_0,b_0\in \C \) that can be determined by matching coefficients.
\end{method}

\begin{example}\label{ex36}
Suppose that \( f_{xxx}= x_1^2 x_2-x_2^3\). Then \( \Delta_{xxx}=0 \) and \( F_{6u}=4 u_3^6  \), so, by \lref{lem33}, \( \rank[f_{xxx}]=2 \). To write \( f_{xxx} \) as a sum of two cubes, we factor \( \theta_{uuxx}\):
\[ 
 \theta_{uuxx}= -4 u_3^2 ( x_1^2 +3 x_2^2)=  -4 u_3^2 \left( x_1 - i \sqrt{3}\,x_2\right)\left( x_1 + i \sqrt{3}\,x_2\right)
 \] Hence \( f_{xxx} \) is a linear combination of \( \left( x_1 - i \sqrt{3}\,x_2\right)^3 \) and \( \left( x_1 + i \sqrt{3}\,x_2\right) ^3 \). By matching coefficients, we find
\[  f_{xxx} = \frac{1}{6i\sqrt{3} }
   \left( \left( x_1 - i \sqrt{3}\,x_2\right) ^3- \left( x_1 + i \sqrt{3}\,x_2\right) ^3\right). \]
Of course,  \lref{lem33} also says that \( f_{xxx} \) can be written as a product of three linearly dependent  forms. For this simple example, this factorization is easy to find: \(  f_{xxx}=x_2(x_1+x_2)(x_1- x_2) \).
\end{example}

\begin{example}\label{ex49}
As in \lref{lem33}\upitem3, suppose \(f_{xxx}=a_xb_xc_x  \) for some linearly dependent, but pairwise independent forms \(\{a_x,b_x,c_x\}\). We could use \mref{meth23} to write \( f_{xxx} \) as a sum of two cubes, but factoring \( \theta_{xxuu}  \) in the presence of the linear dependence condition is tricky. Instead we can proceed as follows.

 If it happened that \( a_x+b_x+c_x=0 \), then the identity
\begin{equation}\label{eq130}
9 a_x b_x c_x = (1 + 2 \omega )\left((a_x - \omega ^2  b_x)^3 - (a_x - \omega  b_x)^3\right) + 9  a_x  b_x (a_x + b_x + c_x)
\end{equation}
with \( \omega=e^{2\pi i/3} \), could be used to express \( f_{xxx}=a_xb_xc_x \) as a sum of two cubes. 

In the general case,  because \(\{a_x,b_x,c_x\}\) is dependent, but pairwise independent, we  have \( a_0 a_x+b_0 b_x+c_0 c_x=0 \) for some \( a_0,b_0,c_0\in \C \), none of which are zero. If we replace \( a_x \) by \( a_0a_x \),  \( b_x \) by \( b_0b_x \), and \( c_x \) by \( c_0c_x \) in \eqref{eq130},  and set \( a_0 a_x+b_0 b_x+c_0 c_x=0 \), we get 
\[ 9 a_0 b_0 c_0 a_x b_x c_x = (1 + 2 \omega )\left((a_0 a_x - \omega ^2  b_0 b_x)^3 - (a_0 a_x - \omega b_0 b_x)^3\right) 
 \] which can be solved for \( f_{xxx}=a_xb_xc_x \) to express \( f_{xxx}\) as a sum of two cubes.
\end{example}

\begin{lemma}\label{lem32}
For a cubic form \(f_{xxx}\), the following are equivalent:
\begin{enumerate}
\item  \(F_{6u}=0\) and \(\theta_{uuxx} \neq 0\).

\item \(f_{xxx}=a_x^2 b_x\) for some linearly independent  forms  \(\{a_x,b_x\}\).
\end{enumerate}
When these conditions hold,  \(\rank[f_{xxx}]=3 \).
\end{lemma}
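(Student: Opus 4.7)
The plan is to derive both the structural characterization and the rank computation from facts already established earlier in the paper, without introducing any new machinery.

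For the equivalence (1) $\Leftrightarrow$ (2), my starting point is \lref{lem22}\eqref{cub8}, which states that $F_{6u}=0$ holds if and only if $f_{xxx}=a_x^2 b_x$ for some linear forms $a_x, b_x$. In the direction (1) $\Rightarrow$ (2) I would use the auxiliary hypothesis $\theta_{uuxx}\neq 0$ to upgrade this to independence: if $\{a_x,b_x\}$ were dependent, then $f_{xxx}$ would be a scalar multiple of $a_x^3$, whence $\theta_{uuxx}=0$ by \lref{lem22}\eqref{cub1}, contradicting the hypothesis. Conversely, (2) $\Rightarrow$ (1) gives $F_{6u}=0$ directly from \lref{lem22}\eqref{cub8}, and the nonvanishing of $\theta_{uuxx}$ is a quick contradiction: if $\theta_{uuxx}=0$, then \lref{lem22}\eqref{cub1} writes $f_{xxx}=c_x^3$ for some $c_x$, and unique factorization in $\C[\xx]$ forces both $a_x$ and $b_x$ to be proportional to $c_x$, contradicting independence.

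For the rank assertion, the lower bound $\rank[f_{xxx}]\geq 3$ is exactly \lref{lem2145}\upitem3 applied to the hypothesis of (1). For the upper bound, I would exhibit $f_{xxx}=a_x^2 b_x$ as a sum of three cubes using the elementary identity
\[ 6\,a_x^2 b_x \;=\; (a_x+b_x)^3 - (a_x-b_x)^3 - 2\,b_x^3, \]
absorbing the scalar factors and signs into the cubed linear forms (permissible over $\C$). Combining the two bounds yields $\rank[f_{xxx}]=3$.

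There is no serious obstacle: the whole argument is a careful combination of \lref{lem22}, \lref{lem2145}\upitem3, and one cubic identity. The only mildly delicate step is the contrapositive appeal to unique factorization in the implication (2) $\Rightarrow$ (1), but this is entirely routine in $\C[\xx]$.
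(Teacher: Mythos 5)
Your proof is correct and follows the same skeleton as the paper's: both rest on \lref{lem22}\eqref{cub8} for the link between \(F_{6u}=0\) and \(f_{xxx}=a_x^2b_x\), both get the lower bound \(\rank[f_{xxx}]\geq 3\) from \lref{lem2145}\upitem3, and both use the identity \(6\,a_x^2b_x=(a_x+b_x)^3-(a_x-b_x)^3-2\,b_x^3\) for the upper bound. The one place you diverge is in tying \(\theta_{uuxx}\neq 0\) to the independence of \(\{a_x,b_x\}\): the paper does this in a single stroke via the explicit evaluation \(\theta_{uuxx}=-4[abu]^2a_x^2\) for \(f_{xxx}=a_x^2b_x\), so that \(\theta_{uuxx}\neq 0\) iff \([abu]\neq 0\) iff \(\{a_x,b_x\}\) is independent, whereas you argue each direction separately from the characterization \(\theta_{uuxx}=0\Leftrightarrow f_{xxx}\) is a perfect cube (\lref{lem22}\eqref{cub1}) together with unique factorization in \(\C[\xx]\). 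Your route is slightly more qualitative and avoids knowing the concomitant formula, at the cost of a two-case contrapositive; the paper's computation is shorter and also records the useful fact that \(\theta_{uuxx}\) is, up to scalar, \([abu]^2a_x^2\). Both are complete; there is no gap.
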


\begin{proof}
(1)\( \Leftrightarrow \)(2): From \lref{lem22}\eqref{cub8}, we have  \( F_{6u}=0 \) if and only if \( f_{xxx}= a_x^2 b_x \) for some  forms  \(a_x \) and \(b_x\). But if \( f_{xxx}= a_x^2 b_x \), then \( \theta_{uuxx}=-4 [abu]^2a_x^2 \). So \( \theta_{uuxx}\neq 0 \) if and only if \( [abu]\neq0 \), if and only if  \(\{a_x,b_x\}\) is linearly independent. 

For the final claim, the identity
\begin{equation}\label{eq2118}
6 a_x^2 b_x =(a_x + b_x)^3 - (a_x - b_x)^3 - 2 b_x^3
\end{equation}
implies that \( \rank[f_{xxx}] \leq 3 \), and  \lref{lem2145}\upitem3 implies that \( \rank[f_{xxx}]\geq 3 \).
\end{proof}

As in the above lemma, suppose that \( f_{xxx}=a_x^2 b_x \)  with   \( \{a_x,b_x\} \) linearly independent and \( \rank[f_{xxx}]=3 \). It turns out that there are infinitely many ways of expressing \(  f_{xxx}  \) as a sum of three cubes. To show this, we consider how to find   a form \( c_x \) so that \( g_{xxx}=f_{xxx}- c_x^3 \) has rank \( 2 \).  Since \( \rank[f_{xxx}]=3 \),  \( c_x \) must be nonzero. By \lref{lem33}, \( g_{xxx} \) has rank \( 2 \) if and only if \( \Vert \Delta_{xxx}\Vert_{f\mapsto g}=0 \) and \( \Vert F_{6u }\Vert _{f\mapsto g} \neq 0\).  

By direct calculation, \(  \Vert \Delta_{xxx}\Vert_{f\mapsto g} = 12 [abc]^2a_x^2c_x \). So  \( \Vert \Delta_{xxx}\Vert_{f\mapsto g}=0 \) if and only if  \( [abc]= 0 \) if and only if  \( c_x \) is a linear  combination of \( \{a_x,b_x\} \), that is \( c_x=a_0a_x+b_0b_x \) for some \( a_0,b_0\in \C \). With this extra assumption, 
\[ \Vert F_{6u }\Vert _{f\mapsto g} = b_0^3 (4 - 27 a_0^2 b_0) [abu]^6.\] 
Thus, \( g_{xxx} \) has rank \( 2 \) if and only if \(b_0 (4 - 27 a_0^2 b_0)\neq0 \). Indeed, in this circumstance, using \mref{meth23}, we get
\begin{equation}\label{eqx44}
g_{xxx} =  \frac{(3 a_0 b_0 +  \gamma )}{16 b_0^3  \gamma } ( (a_0 b_0 - \gamma ) a_x - 2 b_0^2 b_x)^3 - \frac{(3 a_0 b_0 -  \gamma )}{16 b_0^3  \gamma }  ( (a_0 b_0 + \gamma ) a_x - 2 b_0^2 b_x)^3 
\end{equation}
 where \( \gamma \in \C \) is nonzero and  satisfies \(  b_0 (4 - 27 a_0^2 b_0 ) + 3  \gamma ^2=0 \).
Since \( f_{xxx}=g_{xxx}+c_x^3 \), this expression for \( g_{xxx} \) can be used to write \( f_{xxx} \) as a sum of three cubes in infinitely many ways.
 
 \begin{method}\label{meth2}
Suppose that \( f_{xxx}=a_x^2 b_x \) for some linearly independent  forms \( \{a_x,b_x\} \). Fix \( a_0,b_0\in \C \)  such that \(b_0 (4 - 27 a_0^2 b_0 )\neq0 \). Set \( c_x=a_0 a_x+b_0b_x \)  and \( g_{xxx}=f_{xxx}- c_x^3 \). Then \( \rank[g_{xxx}]=2 \) so \( g_{xxx} \)  can be written as a sum of two cubes using \mref{meth23} or \eqref{eqx44}. Then \( f_{xxx}=g_{xxx}+c_x^3 \) is expressed as a sum of three cubes.
\end{method}

\begin{example}
Consider \( f_{xxx}=x_1^2 x_2 \). From \lref{lem32}, \( \rank[f_{xxx}]=3 \) and \eqref{eq2118} becomes
\[ 6f_{xxx}= (x_1+x_2)^3-(x_1-x_2)^3-2 x_2^3.\] Using \mref{meth2}, there are other ways of expressing \( f_{xxx} \) as a sum of  three cubes.   For example, suppose that  \( c_x=x_2 \). Then 
\( g_{xxx}=x_1^2 x_2-x_2^3 \)
with \( \Vert F_{6u }\Vert _{f\mapsto g} =4u_3^6\neq 0\) so \( \rank[g_{xxx}]=2 \). We can use \eqref{eqx44} or \eref{ex36} to express \( g_{xxx} \) as a sum of two cubes  
and hence \(f_{xxx}\) as a sum of three cubes:
\[ f_{xxx} =g_{xxx}+c_x^3=  \frac{1}{6i\sqrt{3} }
   \left( \left( x_1 - i \sqrt{3}\,x_2\right) ^3- \left( x_1 + i \sqrt{3}\,x_2\right) ^3\right)+ x_2^3\] 
\end{example}

\begin{lemma}\label{lem35}
\textup[Cusp form\textup] For a cubic form \(f_{xxx}\), the following are equivalent:
\begin{enumerate}
\item \(S=T=0\) and  \(\btheta_{uuxx} \neq 0\).

\item  \(\bF_{6u}=0\) and \(\btheta_{uuxx} \neq 0\).

\item \(\Delta_{xxx}=a_x^2 b_x \) for some  linearly independent  forms  \(\{a_x,b_x\}\).

\item  \(f_{xxx}=  a_x^2 c^{\phantom{2}}_x +  b_x^3\) for some linearly independent  forms  \(\{a_x,b_x,c_x\}\).

 \item \( S=0 \) and  \( \rank[ f_{xxx}]=4 \).
\end{enumerate}
\end{lemma}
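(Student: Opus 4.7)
The plan is to close the loop (1)\(\Leftrightarrow\)(2)\(\Leftrightarrow\)(3)\(\Leftrightarrow\)(4) among the structural characterizations, and then attach (5) through the directions (4)\(\Rightarrow\)(5) and (5)\(\Rightarrow\)(4). The first two equivalences are essentially bookkeeping: (1)\(\Leftrightarrow\)(2) is immediate from \lref{lem2139}, since $S=T=0$ is equivalent to $\bF_{6u}=0$; and (2)\(\Leftrightarrow\)(3) follows by applying \lref{lem32} to $\Delta_{xxx}$ in place of $f_{xxx}$, recognizing that its own $\theta$ and $F_{6u}$ concomitants are by definition $\btheta_{uuxx}$ and $\bF_{6u}$.

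For (3)\(\Rightarrow\)(4), given $\Delta_{xxx}=a_x^2 b_x$ with $\{a_x,b_x\}$ independent, I would invoke \lref{lem22}\eqref{cub5} to get $\J3[f_{xxx},a_x^2 b_x,u_x^3]=0$ and then apply \eqref{cub4} to obtain $f_{xxx}=a_x^2 c_x + b_0 b_x^3$ for some form $c_x$ and scalar $b_0\in \C$. Ruling out $b_0=0$ is immediate: otherwise $f_{xxx}=a_x^2 c_x$, which by \eqref{cuc1} has $\Delta_{xxx}=0$, contradicting the hypothesis. After absorbing a cube root of $b_0$ into $b_x$, linear independence of $\{a_x,b_x,c_x\}$ is forced the same way: if $c_x$ lay in the span of $\{a_x,b_x\}$, then $f_{xxx}$ would be a polynomial in only two linear forms and would factor as a product of three linear forms from a two-dimensional subspace, again forcing $\Delta_{xxx}=0$ via \eqref{cuc1}. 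The converse (4)\(\Rightarrow\)(3) is immediate from \lref{lem228}\eqref{iicusp}, which gives $\Delta_{xxx}=-12[abc]^2 a_x^2 b_x$ with $[abc]\neq 0$.

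The implication (4)\(\Rightarrow\)(5) is a sandwich: $S=0$ from \lref{lem228}\eqref{iicusp}; the upper bound $\rank[f_{xxx}]\leq\rank[a_x^2 c_x]+1=4$ from \lref{lem32}; and the lower bound $\rank[f_{xxx}]\geq 4$ from \lref{lem2145}\upitem5, since $T=0$ and $\Delta_{xxx}\neq 0$.

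The substantive direction is (5)\(\Rightarrow\)(4), which I expect to be the main obstacle. I would fix a rank-four decomposition $f_{xxx}=a_x^3+b_x^3+c_x^3+d_x^3$. By \lref{lem227}\eqref{iifour} and the assumption $S=0$, one of $[abc],[abd],[acd],[bcd]$ vanishes, and after relabeling we may take $[abc]=0$. Setting $g_{xxx}=a_x^3+b_x^3+c_x^3$ gives $\rank[g_{xxx}]\geq\rank[f_{xxx}]-1=3$, which automatically excludes the degenerate cases in which two of $\{a_x,b_x,c_x\}$ are proportional (there $g_{xxx}$ collapses to a sum of at most two cubes). So $\{a_x,b_x,c_x\}$ is pairwise independent and spans a two-dimensional subspace, making $g_{xxx}$ a binary cubic in that span, whose Waring rank is intrinsic and in particular is at most $3$. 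A factorization-type analysis via \lref{lem31}, \lref{lem32} and \lref{lem33}\upitem3 shows that rank $3$ forces $g_{xxx}=L_x^2 M_x$ for some linearly independent $\{L_x,M_x\}$. Then $f_{xxx}=L_x^2 M_x+d_x^3$; if $d_x$ lay in the span of $\{L_x,M_x\}$ then $f_{xxx}$ would itself be a binary cubic of rank at most $3$, contradicting $\rank[f_{xxx}]=4$. Hence $\{L_x,M_x,d_x\}$ is linearly independent, giving (4) with $a_x:=L_x$, $c_x:=M_x$ and $b_x:=d_x$.
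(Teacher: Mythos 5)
Your proposal is correct and follows essentially the same route as the paper's own proof: the same chain \upitem1\(\Leftrightarrow\)\upitem2\(\Leftrightarrow\)\upitem3 via \lref{lem2139} and \lref{lem32} applied to \(\Delta_{xxx}\), the same use of \lref{lem22}\eqref{cub5} and \eqref{cub4} for \upitem3\(\Rightarrow\)\upitem4, and the same strategy for \upitem5\(\Rightarrow\)\upitem4 of using \(S=0\) to kill one bracket in a four-cube decomposition and identifying the residual three-cube form as \(L_x^2M_x\). The only cosmetic differences are that you close the cycle through \upitem4\(\Rightarrow\)\upitem3 rather than \upitem4\(\Rightarrow\)\upitem1, and you dispose of the degenerate cases (\(b_0=0\), dependent triples) by appealing to the intrinsic rank of binary cubics, where the paper invokes \lref{lem228}\eqref{iicusp} and the \(\Delta_{xxx}=0\) criterion directly.
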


\begin{proof}
(1) and (2) are equivalent because of \lref{lem2139}. The equivalence of (2) and (3) is just \lref{lem32} with \( f_{xxx} \) replaced by \( \Delta_{xxx} \). 

(3)\( \Rightarrow \)(4): If (3) holds, then  \( \J3[f_{xxx},a_x^2b_x,u_x^3]=0 \) by  \lref{lem22}\eqref{cub5}, and \(f_{xxx}=  a_x^2 c^{\phantom{2}}_x +  b_x^3\) follows from \lref{lem22}\eqref{cub4} after scaling \( b_x \).  The forms  \(\{a_x,b_x,c_x\}\) must be  linearly independent, since otherwise \( [abc]=0 \) and  \(  \Delta_{xxx}  = 0\) by \lref{lem228}\eqref{iicusp}.

(4)\( \Rightarrow \)(1): If \upitem4 holds, then \upitem1 follows from \lref{lem228}\eqref{iicusp} since \( [abc]\neq 0 \).

So far we have proved the equivalence of \upitem1, \upitem2, \upitem3 and \upitem4. It remains only to include \upitem5 in these equivalent conditions.

(1,3)\( \Rightarrow \)(5): From (1) and (3), we have \(S=T=0\) and  \( \Delta_{xxx}\neq 0\), so from \lref{lem2145}\upitem5,  \(\rank[f_{xxx}]\geq 4\). On the other hand, from \eqref{eq2118}, \(a_x^2 c^{\phantom{2}}_x\) is a sum of three cubes, so  \(f_{xxx}=  a_x^2 c^{\phantom{2}}_x +  b_x^3\) is a sum of four cubes and  \(\rank[ f_{xxx}]=4 \) and  \upitem5 holds.

(5)\( \Rightarrow \)(4): Since \( \rank[f_{xxx}]=4 \), we have  \(  f_{xxx}=a_x^3+b_x^3+c_x^3+d_x^3 \) for some  forms \( a_x \), \( b_x \), \( c_x \) and \( d_x \). From \lref{lem227}\eqref{iifour}, we have  \(S=1296 [a b c] \, [a b d] \, [a c d]  \,[b c  d]\), which, because  \(S=0\), implies that three of the linear forms are linearly dependent. Suppose, without loss of generality, that \([acd]=0\) and write \( g_{xxx}=a_x^3+c_x^3+d_x^3 \). Then, by \lref{lem227}\eqref{iithree}, the Hessian of 
\(g_{xxx}\) is zero. At the same time the  rank of \(g_{xxx}\)  must be at least three so that the  rank of \(  f_{xxx}\) is four. 
 From Lemmas~\ref{lem31}, \ref{lem33} and~\ref{lem32}, (or from the first few rows of the table), this is possible only if \(g_{xxx}= a_x^2 c_x\) (not the same \(a_x\) and  \(c_x\) as in the rest of the proof). Thus  \( f_{xxx}= g_{xxx}+b_x^3=a_x^2 c_x+b_x^3\).
 
 Finally,  \(\{a_x,b_x,c_x\}\) must be independent since otherwise, from \lref{lem228}\eqref{iicusp},  we would have   \( \Delta_{xxx}=0 \)  and then \( \rank[f_{xxx}]\leq 3 \),  by Lemmas~\ref{lem31}, \ref{lem33} and~\ref{lem32}.
\end{proof}

\begin{lemma}\label{lem34}
\textup[Irreducible conic with tangent line\textup] For a cubic form \(f_{xxx}\), the following are equivalent:
\begin{enumerate}
\item  \(\btheta_{uuxx}=0\) and \(\Delta_{xxx} \neq 0\).

\item \(\Delta_{xxx}=a_x^3\) for some nonzero  form \(a_x\).

\item  \(f_{xxx}=a_x(a_x c_x+ b_x^2  )  \) for some linearly independent  forms  \(\{a_x,b_x,c_x\}\).
\end{enumerate}
When these conditions hold,  \(\rank[f_{xxx}]=5 \).
\end{lemma}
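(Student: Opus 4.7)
The plan splits into three parts: establishing the equivalences $(1)\Leftrightarrow(2)\Leftrightarrow(3)$, and then proving $\rank[f_{xxx}]=5$ under these conditions. For $(1)\Leftrightarrow(2)$ I would apply \lref{lem22}\eqref{cub1} to $\Delta_{xxx}$ in place of $f_{xxx}$: $\btheta_{uuxx}=0$ iff $\Delta_{xxx}=a_x^3$ for some form $a_x$, with $\Delta_{xxx}\neq 0$ forcing $a_x\neq 0$.  For $(3)\Rightarrow(2)$ I would simply invoke \lref{lem228}\eqref{iitang}: $\Delta_{xxx}=-4[abc]^2 a_x^3$ is a nonzero cube when $\{a_x,b_x,c_x\}$ is linearly independent.

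For the main direction $(2)\Rightarrow(3)$, I would first extract $a_x$ as a factor of $f_{xxx}$.  From \lref{lem22}\eqref{cub5} we have $\J3[f_{xxx},\Delta_{xxx},u_x^3]=0$, so substituting $\Delta_{xxx}=a_x^3$ and applying \lref{lem22}\eqref{cub2} gives $a_x\mid f_{xxx}$.  Write $f_{xxx}=a_xg_{xx}$ and split on the structure of $g_{xx}$.  The case $g_{xx}=0$ is excluded by $\Delta_{xxx}\neq 0$.  If $g_{xx}=p_xq_x$ is reducible, then $\{a_x,p_x,q_x\}$ must be linearly independent (else $\Delta_{xxx}=0$ by \lref{lem22}\eqref{cuc1}), so \lref{lem22}\eqref{iiabc} gives $\Delta_{xxx}=[apq]^2 a_xp_xq_x$; this equals $a_x^3$ only if $p_xq_x$ is a multiple of $a_x^2$, violating the independence.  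When $g_{xx}$ is irreducible, \lref{lemx66} provides two subcases: the tangent subcase yields exactly (3), and the secant subcase $g_{xx}=a_0a_x^2+b_xc_x$ with $a_0\neq 0$ and $\{a_x,b_x,c_x\}$ independent must be ruled out.  The plan for the latter is to normalize via the linear change $a_x=x_1$, $b_x=x_2$, $c_x=x_3$ and compute the Hessian of $a_0x_1^3+x_1x_2x_3$ directly; the result is proportional to $-3a_0a_x^3+a_xb_xc_x$, which is not a multiple of $a_x^3$ because $a_x^3$ and $a_xb_xc_x$ are distinct basis elements in \lref{lem22}\eqref{cub7}, contradicting (2).

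For $\rank[f_{xxx}]=5$, the lower bound follows from \eqref{eq34}: $\btheta_{uuxx}=0$ forces $T_{uuu}=0$ and $S=T=0$, so with $\Delta_{xxx}\neq 0$, \lref{lem2145}\upitem5 gives $\rank[f_{xxx}]\geq 4$, and a rank of exactly $4$ with $S=0$ would make $f_{xxx}$ a cusp form by \lref{lem35}, requiring $\btheta_{uuxx}\neq 0$ and contradicting the hypothesis.  For the upper bound, I would exhibit an explicit sum of five cubes for $f_{xxx}=a_x^2c_x+a_xb_x^2$.  The naive application of \eqref{eq2118} to each of $a_x^2c_x$ and $a_xb_x^2$ produces only a six-cube representation, so the main technical obstacle is eliminating one cube.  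The plan is the ansatz $\sum_{i=1}^5 \lambda_iL_i^3$ with $L_1=a_x+b_x$, $L_2=a_x-b_x$, $L_3=a_x+\mu c_x$, $L_4=a_x-\mu c_x$, $L_5=a_x-c_x$; expansion in the basis of \lref{lem22}\eqref{cub7} reduces the problem to a linear system in $\lambda_1,\dots,\lambda_5$ that is solvable for any $\mu\notin\{0,\pm 1\}$, producing the required five-cube decomposition.
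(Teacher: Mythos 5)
Your proposal is correct, and it matches the paper on \((1)\Leftrightarrow(2)\), \((3)\Rightarrow(2)\), and the lower bound \(\rank[f_{xxx}]\geq 5\) (the paper reads \(S=T=\btheta_{uuxx}=0\) off \lref{lem228}\eqref{iitang} rather than off \eqref{eq34}, but the logic via \lref{lem2145}\upitem5 and \lref{lem35} is identical). You diverge in two places. For \((2)\Rightarrow(3)\), after obtaining \(f_{xxx}=a_xg_{xx}\) exactly as the paper does, the paper invokes the single identity \(12\,\Delta_{xxx}=4\J2[g_{xx},g_{xx},g_{xx}]a_x^3-3\J2[g_{xx},g_{xx},a_x^2]a_xg_{xx}\), which forces \(\J2[g_{xx},g_{xx},a_x^2]=0\) at once and handles all cases uniformly; your case split (reducible \(g_{xx}\) killed via \lref{lem22}\eqref{cuc1}, \eqref{iiabc} and the basis \(\B_3\); secant case killed by computing the Hessian of \(a_0a_x^3+a_xb_xc_x\), which is \([abc]^2(a_xb_xc_x-3a_0a_x^3)\) and has a nonzero \(a_xb_xc_x\)-component) is longer but equally valid, and every computation you need already appears elsewhere in the paper (the Hessian formula is in the proof of \lref{lem36}). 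For the upper bound, the paper writes down the closed identity \(3f_{xxx}=(a_x-c_x)^3+c_x^3-a_x^2(a_x-3b_x-3c_x)\) and finishes with \eqref{eq2118}; your ansatz does work --- the seven coefficient equations in the basis \(\B_3\) reduce to \(\lambda_1=\lambda_2=\tfrac16\), \(\lambda_3-\lambda_4=\tfrac{1}{3\mu(1-\mu^2)}\), \(\lambda_3+\lambda_4=-\mu(\lambda_3-\lambda_4)\), \(\lambda_5=\mu^3(\lambda_3-\lambda_4)\), and the remaining \(a_x^3\)- and \(b_x^3\)-equations are then automatically satisfied --- so you get a one-parameter family of five-cube decompositions where the paper exhibits just one. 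The trade-off: the paper's route is shorter and computation-free at the reader's end, while yours makes the exhaustiveness of the geometric cases (tangent versus secant versus degenerate conic) explicit and yields infinitely many decompositions.
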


\begin{proof}
The equivalence of (1) and (2) is just \lref{lem31} with \( f_{xxx} \) replaced by \( \Delta_{xxx} \).

(2)\( \Rightarrow \)(3):  Since \(\Delta_{xxx}=a_x^3\), from \lref{lem22}\eqref{cub5}, we have \(\J3[f_{xxx},a_x^3,u_x^3]=0\) and then \lref{lem22}\eqref{cub2} gives  \( f_{xxx}=a_xg_{xx} \) for some  form \( g_{xx} \).  With \( f_{xxx}=a_xg_{xx} \),  a direct calculation gives
\[ \Delta_{xxx}=  \frac1{12}\left(4 \J2[g_{xx}, g_{xx}, g_{xx}] a_x^3 - 3 \J2[g_{xx}, g_{xx}, a_x^2] a_x g_{xx}\right) \]
This seems to give two ways that \( \Delta_{xxx}=a_x ^3 \): Either \( g_{xx}  \) is a scalar multiple of \( a_x^2 \)  or \( \J2[g_{xx}, g_{xx}, a_x^2] =0\).  In the first case, \( f_{xxx} \) is a scalar multiple of \( a_x^3 \) and so we would have \( \Delta_{xxx}=0 \) (\lref{lem227}\eqref{iione}), contrary to the current situation. In the second case, by \lref{lemx66}(1), \( g_{xx}=a_x c_x+ b_x^2 \) for some   forms \( b_x \) and \( c_x \). With this expression for \( g_{xx} \), we have  \(f_{xxx}=a_x(a_x c_x+ b_x^2  )  \). From \lref{lem228}\eqref{iitang}, \( \Delta_{xxx}=-4 [abc]^2 a_x^3 \). Since \(\Delta_{xxx}\) is nonzero, \([abc]\neq 0\) and  \(\{a_x,b_x,c_x\}\) is linearly independent.

(3)\( \Rightarrow \)(2): From \lref{lem228}\eqref{iitang}, if \(f_{xxx}=a_x(a_x c_x+ b_x^2  )  \), then \( \Delta_{xxx}=-4 [abc]^2 a_x^3 \). Because  \(\{a_x,b_x,c_x\}\) is linearly independent, we have \( [abc]\neq 0 \) and so \(\Delta_{xxx}\) is a nonzero cube.

For the final claim, from \lref{lem228}\eqref{iitang} we get \( S=T= \btheta_{uuxx}=0 \). Since we also have  \( \Delta_{xxx}\neq 0 \), \lref{lem2145}\upitem5 implies that  \( \rank[f_{xxx}]\geq 4\). Because \( S\) and \(\btheta_{uuxx}\) are zero, \lref{lem35} implies that \( \rank[f_{xxx}]=4 \) is not possible. Finally, we can write  
\[ 3f_{xxx}=(a_x - c_x)^3 + c_x^3 - a_x^2 (  a_x - 3 b_x - 3 c_x). \]
By \eqref{eq2118}, the third term on the right is a sum of three cubes, and so \(f_{xxx}\) is a sum of five cubes and \( \rank[f_{xxx}]=5 \).
\end{proof}

\begin{lemma}\label{lem36}
For a cubic form \(f_{xxx}\), the following are equivalent:
\begin{enumerate}
\item \( S=0\) and \(T\neq 0\)

\item \(f_{xxx}=a_x^3+b_x^3+c_x^3 \) for some linearly independent  forms  \(\{a_x,b_x,c_x\}\).
\end{enumerate}
When these conditions hold,  \(\rank[f_{xxx}]=3\).
\end{lemma}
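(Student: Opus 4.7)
The direction (2) $\Rightarrow$ (1) is immediate from Lemma~\ref{lem227}\eqref{iithree}: for a sum of three cubes with linearly independent summands we have $S=0$ and $T=-5832\,[abc]^6\neq 0$ since $[abc]\neq 0$. The plan for the converse is threefold: (a) show that the Hessian $\Delta_{xxx}$ is a product of three linearly independent linear forms, (b) apply Lemmas~\ref{lem22}\eqref{cub5} and~\eqref{cub6} to obtain a restricted form for $f_{xxx}$, and (c) show that the cross term $d_0\,a_x b_x c_x$ in that restricted form must vanish.

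For (a), using \eqref{eq2144} and $S=0$ gives $\bDelta_{xxx}=-2T\,\Delta_{xxx}$, so $\{\Delta_{xxx},\bDelta_{xxx}\}$ is linearly dependent, and Lemma~\ref{lem22}\eqref{cub3} applied to $\Delta_{xxx}$ shows it is completely reducible. The chain~\eqref{eq34} together with $T\neq 0$ forces $\Delta_{xxx}\neq 0$, so after absorbing scalars we may write $\Delta_{xxx}=a_x b_x c_x$ for linear forms $a_x,b_x,c_x$. Applying Lemma~\ref{lem22}\eqref{iiabc} to $\Delta_{xxx}$ gives $\bS=[abc]^4$, and since $\bS=4T^2-3S^3=4T^2\neq 0$ we obtain $[abc]\neq 0$, so $\{a_x,b_x,c_x\}$ is linearly independent. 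For (b), Lemma~\ref{lem22}\eqref{cub5} yields $\J3[f_{xxx},a_x b_x c_x,u_x^3]=\J3[f_{xxx},\Delta_{xxx},u_x^3]=0$, and then Lemma~\ref{lem22}\eqref{cub6} produces
\[ f_{xxx}=a_0 a_x^3+b_0 b_x^3+c_0 c_x^3+d_0\,a_x b_x c_x \]
for some constants $a_0,b_0,c_0,d_0\in\C$.

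Step (c) is the main technical hurdle. I would substitute this restricted form into a direct computation of $\Delta_{xxx}$: after a linear change of variables sending $\{a_x,b_x,c_x\}$ to the coordinate forms $\{x_1,x_2,x_3\}$, expanding the determinant of the second-derivative matrix produces terms whose $x_i^3$ coefficients are proportional to $d_0^2 a_0$, $d_0^2 b_0$, and $d_0^2 c_0$. Since $\Delta_{xxx}$ is (in these coordinates) a scalar multiple of $x_1 x_2 x_3$, and since $\{x_1^3,x_2^3,x_3^3,x_1 x_2 x_3\}$ lies in the basis of Lemma~\ref{lem22}\eqref{cub7}, each of these three quantities must vanish. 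If $d_0\neq 0$ then $a_0=b_0=c_0=0$, whence $f_{xxx}=d_0\,a_x b_x c_x$ and Lemma~\ref{lem22}\eqref{iiabc} gives $S=d_0^4 [abc]^4\neq 0$, contradicting the hypothesis. Hence $d_0=0$ and $f_{xxx}=a_0 a_x^3+b_0 b_x^3+c_0 c_x^3$; Lemmas~\ref{lem227}\eqref{iione} and~\eqref{iitwo} (or a direct Hessian computation) force $a_0 b_0 c_0\neq 0$, since otherwise $\Delta_{xxx}=0$. Absorbing cube roots finally expresses $f_{xxx}$ as a sum of three cubes of linearly independent linear forms.

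For the rank assertion, (2) shows $\rank[f_{xxx}]\leq 3$, while $T\neq 0$ combined with~\eqref{eq34} gives $\Delta_{xxx}\neq 0$, so Lemma~\ref{lem2145}\upitem2 gives $\rank[f_{xxx}]\geq 3$, hence equality. The hardest part is step (c): verifying by explicit Hessian expansion that the cross term cannot survive; every other step is a direct application of the concomitant lemmas already developed.
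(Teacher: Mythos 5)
Your proposal is correct and follows essentially the same route as the paper: factor \( \Delta_{xxx} \) as a product of three independent linear forms via \lref{lem22}\eqref{cub3} and \eqref{iiabc}, obtain \( f_{xxx}=a_0a_x^3+b_0b_x^3+c_0c_x^3+d_0a_xb_xc_x \) from \lref{lem22}(\ref{cub5},\ref{cub6}), and kill the cross term by comparing the explicit Hessian of that restricted form against the basis \( \B_3 \). Your minor variations (using \( \bS=4T^2-3S^3 \) in place of \lref{lem2139}, and excluding the completely reducible case via \( S=d_0^4[abc]^4\neq0 \)) are equally valid.
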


\begin{proof}
(1)\( \Rightarrow \)(2): 
Here we apply \lref{lem22}\eqref{cub3} and \eqref{iiabc} to the cubic form \( \Delta_{xxx} \) in place of \( f_{xxx} \). Because \( S=0\), \eqref{eq2144} implies that \(\bDelta_{xxx}\)  is a multiple of \(\Delta_{xxx}\). By \lref{lem22}\eqref{cub3}, \( \Delta_{xxx} \) is completely reducible, that is, \( \Delta_{xxx}=a_xb_xc_x \) for some  forms \(\{a_x,b_x,c_x\}\).  Since \( T\neq0 \),  \lref{lem2139} implies that \(\bDelta_{xxx}\)  is nonzero. Because of this and \lref{lem22}\eqref{iiabc}, we have \( [abc]\neq 0 \) and so \(\{a_x,b_x,c_x\}\) is linearly independent.

From \lref{lem22}(\ref{cub6},\ref{cub5}) we have \[\J3[f_{xxx},a_xb_xc_x,u_x^3]=\J3[f_{xxx},\Delta_{xxx},u_x^3]=0\] and   \( f_{xxx}=a_0 a_x^3+ b_0 b_x^3+c_0 c_x^3+d_0 a_x b_x c_x \) for some \( a_0,b_0,c_0,d_0\in \C \).  
But if \( f_{xxx} \) has this form, then a direct calculation gives
\[ \Delta_{xxx}=[abc]^2 \left((d_0^3 + 108 a_0 b_0 c_0) a_x b_x c_x -   3 d_0^2 (a_0 a_x^3+b_0 b_x^3+ c_0 c_x^3)\right). \]
Because  \( \Delta_{xxx}=a_x b_x c_x \), the linear independence of \(\B_3\) (\lref{lem22}\eqref{cub7}) implies that  \( d_0a_0=d_0b_0=d_0c_0=0 \), so either \( a_0=b_0=c_0=0 \) or  \( d_0=0 \).  In the first case, \(f_{xxx}=d_0 a_x b_x c_x \) is completely reducible.  But for completely reducible cubic forms, \( S=0 \) implies \( T=0 \) (\lref{lem22}\eqref{iiabc}), so this case is eliminated. Hence we have \( d_0=0 \) and  \( f_{xxx}= a_0 a_x^3+b_0 b_x^3+ c_0 c_x^3\).

Finally, because \(T\) is nonzero, \( f_{xxx} \) cannot be a sum of two cubes (\lref{lem227}\eqref{iitwo}), so \(a_0\), \(b_0\), \(c_0\) must be nonzero. By scaling \( a_x \), \( b_x \) and \( c_x \), \( f_{xxx} \) can be written in the claimed form. 

(2)\( \Rightarrow \)(1): This follows directly from \lref{lem227}\eqref{iithree}.

For the final claim,  because  of  \upitem2 and \lref{lem2145}\upitem2 we have \( \rank[f_{xxx}]\geq 3 \). On the other hand,  by \upitem3, \(f_{xxx}\) is a sum of three cubes and so   \(\rank[f_{xxx}]\leq 3\).
\end{proof}

It is frequently claimed that \( S=0 \) is a necessary and sufficient condition for \( f_{xxx} \) to be written as a sum of three cubes \cite{ottaviani2007} .  It is clearly a necessary condition because of \lref{lem227}\eqref{iithree}, but it is not sufficient as seen in Lemma~\ref{lem35} and \ref{lem34}. Specifically, the forms \(f_{xxx}=  a_x^2 c^{\phantom{2}}_x +  b_x^3\) and \(f_{xxx}=a_x(a_x c_x+ b_x^2)  \)  with linearly independent  forms  \(\{a_x,b_x,c_x\}\), have  ranks \( 5 \) and \( 4 \) so cannot be written as sums of three cubes, even though \( S=T=0 \) for these forms.

\begin{method}\label{meth4}
Suppose that \( f_{xxx} \) is a cubic form such that \( S=0 \) and \( T\neq 0 \). Write \( \Delta_{xxx}=a_xb_xc_x \) for some linearly independent  forms \( \{a_x,b_x,c_x\} \). \textup(Factoring methods are discussed in \cite{brookfield2021}.\textup)  Then  \(f_{xxx}=a_0a_x^3+b_0b_x^3+c_0c_x^3 \) for some \( a_0,b_0,c_0\in \C \) that can be determined by matching coefficients.
\end{method}

If, as in the above lemma, \( S=0 \) and \( T\neq 0 \) for a cubic form \( f_{xxx} \), then \lref{lem227}\eqref{iithree} shows that the  forms appearing in the expression \(f_{xxx}=a_x^3+b_x^3+c_x^3 \) are the linear factors of \( \Delta_{xxx} \). Because of unique factorization, they are unique, up to order and multiplication by scalars.

\begin{example}\label{ex39}
Suppose that \( f_{xxx}=x_2^2 x_3-x_1^3-q x_3^3 \) for some nonzero \( q\in \C \). Then 
\begin{gather*}
\btheta_{uuxx}=12 q x_3 (3 u_2^2 x_1 - u_1^2 x_3) + 
4 (6 u_2 u_3 x_1 x_2 - u_1^2 x_2^2 - 3 u_3^2 x_1 x_3)\\
 \Delta_{xxx}=12 x_1(x_2^2-3q x_3^2) \qquad S=0 \qquad T=864 q 
\end{gather*}
 Since \( q\neq 0 \),  \lref{lem36} applies and \( \rank[f_{xxx}]=3 \). To express \( f_{xxx} \) as a sum of three cubes, we factor \( \Delta_{xxx} \): \[ \Delta_{xxx}=12  x_1 (x_2 +  \tau x_3) (x_2 - \tau x_3) \] where \( \tau^2+3q=0 \), and we expect \( f_{xxx}=a_0 x_1^3+b_0(x_2 + \tau x_3)^3+ c_0 (x_2 - \tau x_3)^3 \) for some \( a_0,b_0,c_0\in \C \). By matching coefficients we get 
 \begin{equation}\label{eq1314}
 f_{xxx} =-  x_1^3 + \frac1{6 \tau} (x_2 + \tau x_3)^3 - \frac1{6 \tau} (x_2 - \tau x_3)^3 
\end{equation}
\end{example}

\begin{lemma}\label{lem37}
For a cubic form \(f_{xxx}\), the following are equivalent:
\begin{enumerate}
\item  \( S\neq 0 \)

\item \(   f_{xxx}=a_x^3+b_x^3+c_x^3+d_x^3  \) for some triplewise independent  forms \( \{a_x,b_x,c_x,d_x\} \)
\end{enumerate}
When these conditions hold,  \(\rank[f_{xxx}]=4\).
\end{lemma}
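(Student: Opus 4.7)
Both (2)\( \Rightarrow \)(1) and the rank formula are quick. If (2) holds, \lref{lem227}\eqref{iifour} gives \( S = 1296\,[abc][abd][acd][bcd] \); triplewise independence means each triple determinant is nonzero, so \( S \neq 0 \). The four-cube expression gives \( \rank[f_{xxx}] \leq 4 \), and \lref{lem2145}\upitem4 supplies the matching \( \rank[f_{xxx}] \geq 4 \) whenever \( S \neq 0 \), establishing the rank claim under either condition.

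The hard direction (1)\( \Rightarrow \)(2) reduces to \lref{lem36} via a single cube subtraction. The plan is to produce a linear form \( d_x \) such that \( g_{xxx} := f_{xxx} - d_x^3 \) satisfies \( \Vert S \Vert_{f \mapsto g} = 0 \) and \( \Vert T \Vert_{f \mapsto g} \neq 0 \). \lref{lem36} then supplies linearly independent \( \{a_x,b_x,c_x\} \) with \( g_{xxx} = a_x^3 + b_x^3 + c_x^3 \), yielding \( f_{xxx} = a_x^3 + b_x^3 + c_x^3 + d_x^3 \). The triplewise independence of the four forms is automatic: \lref{lem227}\eqref{iifour} expresses \( S \) as the product \( 1296\,[abc][abd][acd][bcd] \), so \( S \neq 0 \) forces each of the four triple determinants to be nonzero.

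The main obstacle is the existence of such a \( d_x \). Parameterizing along a ray \( d_x = \mu e_x \) for a fixed nonzero linear form \( e_x \), the quantity \( \Vert S \Vert_{f \mapsto g} \) becomes a polynomial in \( \mu^3 \) of degree at most \( 3 \): the would-be top coefficient in \( \mu \) is proportional to \( \Vert S \Vert_{f \mapsto e^3} \), which vanishes by \lref{lem227}\eqref{iione} because the Aronhold invariant of a cube is zero. Its constant term is \( S \neq 0 \) by hypothesis, so this is a nonzero polynomial in \( \mu \) and admits a root \( \mu_0 \neq 0 \); then \( d_x = \mu_0 e_x \) kills \( \Vert S \Vert_{f \mapsto g} \). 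What remains is to select \( e_x \) so that \( \Vert T \Vert_{f \mapsto g} \neq 0 \) at this root. I expect this to follow either from a genericity argument — the bad locus in \( e_x \)-space is a proper Zariski-closed subvariety, since otherwise \( S \) and \( T \) would satisfy an implausible algebraic identity on the entire translated family — or, more in the style of the paper, from an explicit formula recovering \( d_x \) directly from the concomitants \( S_{uuu} \), \( T_{uuu} \) (and possibly \( \theta_{uuxx} \)), making the non-vanishing of \( \Vert T \Vert_{f \mapsto g} \) an algebraic consequence of \( S \neq 0 \).
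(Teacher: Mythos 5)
Your overall strategy---subtract a single cube so that \lref{lem36} applies to the remainder, then read off triplewise independence from \lref{lem227}\eqref{iifour}---is exactly the paper's, and your treatment of (2)\(\Rightarrow\)(1) and of the rank claim is complete. But your proof of (1)\(\Rightarrow\)(2) has a genuine gap at precisely the step you flag as the hard one: you never establish that \( \Vert T\Vert_{f\mapsto g}\neq 0 \) can be achieved simultaneously with \( \Vert S\Vert_{f\mapsto g}=0 \). ``I expect this to follow from a genericity argument'' is not an argument here, because the bad locus is genuinely nonempty and proving it is a \emph{proper} closed subset amounts to exhibiting one good choice, which is the whole problem. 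The paper's Weierstrass-form section makes the danger concrete: for \( f_{xxx}=x_2^2x_3-(x_1^3+px_1x_3^2+qx_3^3) \) with \( p\neq 0 \), the choice \( u_x=x_1 \) has \( S_{uuu}\neq 0 \), kills \( \Vert S\Vert_{f\mapsto g} \), and yet gives \( \Vert T\Vert_{f\mapsto g}=0 \) with \( \rank[g_{xxx}]=5 \). The paper closes the gap with the identity \eqref{eq2137}, namely \( 72u_0^2\,\bF_{6u}=(ST+24u_0TS_{uuu}-36u_0ST_{uuu})\,\Vert S\Vert_{f\mapsto g}-S^2\,\Vert T\Vert_{f\mapsto g} \): one first chooses \( \uu \) so that \( S_{uuu}\bF_{6u}\neq 0 \) (possible because \( S\neq 0 \) forces both \( S_{uuu} \) and \( \bF_{6u} \) to be nonzero as forms in \( \uu \), by \eqref{eq34} and \lref{lem2139}), then chooses \( u_0 \) to kill \( \Vert S\Vert_{f\mapsto g} \), whereupon the identity collapses to \( 72u_0^2\,\bF_{6u}=-S^2\,\Vert T\Vert_{f\mapsto g}\neq 0 \). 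This is exactly the ``explicit formula recovering the non-vanishing of \( \Vert T\Vert_{f\mapsto g} \) from \( S\neq 0 \)'' that you hoped for but did not produce; without it (or an equivalent), the argument is incomplete.

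A second, smaller slip: from ``the constant term is \( S\neq 0 \), so this is a nonzero polynomial in \( \mu \) and admits a root \( \mu_0\neq 0 \)''---a nonzero polynomial with nonzero constant term has a root only if it is nonconstant. By \eqref{eq2134} your polynomial is \( S-24\mu^3\,S_{uuu}(e) \), which is a nonzero constant exactly when \( S_{uuu}(e)=0 \); so you must additionally require \( S_{uuu}(e)\neq 0 \), which is again available only because \( S\neq 0 \) implies \( S_{uuu}\not\equiv 0 \) via \eqref{eq34}. This is the same hypothesis the paper imposes when it fixes \( \uu \) with \( S_{uuu}\bF_{6u}\neq 0 \), so the fix folds naturally into the repair of the main gap.
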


\begin{proof}
(1)\( \Rightarrow \)(2): First we show that if \( S\neq 0\), then \(f_{xxx}\) is a sum of four cubes. To do this, we seek a  form \( u_x\) and \(u_0\in \C\) such that  \(g_{xxx}=f_{xxx}-u_0 u_x^3\) is a sum of three cubes. The \(S\) and \(T\) concomitants of \(g_{xxx}\) are 
\begin{equation}\label{eq2134}
 \Vert S \Vert_{f\mapsto g} = S - 24 u_0 S_{uuu} \qquad 
 \Vert T \Vert_{f\mapsto g}= T - 36 u_0 T_{uuu} + 216 u_0^2  F_{6u} 
\end{equation}

\emph{Caution: Here we are treating the coefficients of \( u_x \) \textup(that is \( u_1 \), \( u_2 \) and \( u_3 \)\textup) as constants rather than variables. So \( S_{uuu} \), \( F_{6u} \), etc. are also constants rather than forms in \( \uu \).}

Because of \lref{lem36}, it would suffice to find \( u_x\) and \(u_0\in \C\)  so that \(\Vert S \Vert_{f\mapsto g}=0\) and  \(\Vert T \Vert_{f\mapsto g}\neq 0\). The first of these goals is easily accomplished. Simultaneously obtaining the second goal is trickier and depends on the identity 
\begin{equation}\label{eq2137}
72  u_0^2  \bF_{6u}  = (S T + 24 u_0 TS_{uuu}  -  36  u_0 S T_{uuu}) \Vert S \Vert_{f\mapsto g} - S^2  \Vert T \Vert_{f\mapsto g} 
\end{equation}
derived from \eqref{eq2145} and \eqref{eq2134}. 

\new Since \(S\neq 0\), \eqref{eq34} implies that \(\bF_{6u} \) and   \(S_{uuu}\) are nonzero. 
Now  fix \(\uu\in \C\) so that \(S_{uuu}\bF_{6u} \in \C\) is nonzero. In particular,  \(S_{uuu}\) and \(\bF_{6u} \) are both nonzero for that choice of \(\uu\in \C\).  Now fix \(u_0\in \C\) so that \(  S - 24 u_0 S_{uuu} =0\), that is,  \(\Vert S \Vert_{f\mapsto g} =0\). Since \(S\neq 0\), we have \(u_0\neq 0 \). With this choice of \(\uu\) and \(u_0\), \eqref{eq2137} becomes \( 72  u_0^2  \bF_{6u}  = - S^2  \Vert T \Vert_{f\mapsto g}\). Thus \(\Vert T \Vert_{f\mapsto g}\) is nonzero. 

Finally,  because of \lref{lem36}, \(g_{xxx}\) is a sum of three cubes, and consequently, \(f_{xxx}=g_{xxx}+u_0u_x^3\) is a sum of four cubes, that is, \(   f_{xxx}=a_x^3+b_x^3+c_x^3+d_x^3  \) for some  forms \( \{a_x,b_x,c_x,d_x\} \).

With  \(   f_{xxx}=a_x^3+b_x^3+c_x^3+d_x^3  \) and \( S\neq 0 \), \lref{lem227}\eqref{iifour} implies that \( [abc] \), \( [abd] \), \( [acd] \) and \( [bcd] \) are all nonzero, and so any triple of these four forms is independent.

(2)\( \Rightarrow \)(1):  This is immediate from \lref{lem227}\eqref{iifour}.

For the final claim, \( f_{xxx}  \) is a sum of four cubes, so \( \rank[f_{xxx}]\leq 4 \). On the other hand, since  \( S\neq 0 \),  \lref{lem2145}\upitem4 implies that the  rank of \(f_{xxx} \) is at least four. 
\end{proof}

The proof of this lemma provides a method of expressing \( f_{xxx} \) as a sum of four cubes when \( S\neq 0 \):

\begin{method}\label{meth5}
Suppose that \( f_{xxx} \) is a cubic form with \( S\neq 0 \). Fix \( (\uu)\in \C^3 \) so that  \(S_{uuu}\) and \(\bF_{6u} \) are  nonzero. Fix \(u_0\in \C\) so that \(  S - 24 u_0 S_{uuu} =0\), and set \(g_{xxx}=f_{xxx}-u_0 u_x^3\). Then the Hessian of \( g_{xxx} \) is completely reducible so can be written as \( a_xb_xc_x \) for some  forms \( a_x \), \( b_x \) and \( c_x \). \textup(Factoring methods are discussed in \cite{brookfield2021}.\textup) Then \( f_{xxx}=a_0a_x^3+b_0 b_x^3+c_0 c_x^3+u_0 u_x^3 \) for some \( a_0,b_0,c_0\in \C \) that can be determined by matching coefficients in this equation. 
\end{method}

\begin{example}
Suppose that \( f_{xxx}=x_1 (x_2^2 + x_3^2) \). Then \( S= 16\), so, by \lref{lem37}, \( \rank[f_{xxx}]=4 \). To express \( f_{xxx} \) as a sum of four cubes we follow \mref{meth5}. Since
\[ S_{uuu}=4 u_1 (u_2^2 + u_3^2) \quad \bF_{6u} =-1024 u_1^2 (u_2^2 + u_3^2)^2, \]  we can choose, for example, \( u_x=x_1+x_2 \), that is, \( u_1=u_2=1 \) and \( u_3=0 \), so that \( S_{uuu} \) and \(  \bF_{6u} \) are nonzero.  Setting \(g_{xxx}=f_{xxx}-u_0 u_x^3\), we find \( \Vert S\Vert_{f\mapsto g}=16 (1 - 6 u_0) \), so we pick \( u_0=1/6 \) so that \( \Vert S\Vert_{f\mapsto g} =0\). At the same time \( \Vert T\Vert_{f\mapsto g} =8\) is nonzero, so \( \rank[g_{xxx}]=3 \),  as expected.

To express \( g_{xxx} \) as a sum of three cubes we use \mref{meth4}. The Hessian of \( g_{xxx} \) is
\[ \Vert \Delta_{xxx}\Vert_{f\mapsto g}=2 (x_2 - x_1) (x_1^2 + x_3^2)=2 (x_2 - x_1) (x_1+ i x_3) (x_1- i x_3) \] and \( g_{xxx} \) is a linear combination of the cubes of the linear factors of  its Hessian, with coefficients that can be determined by matching coefficients. Once that is done, \( f_{xxx}=g_{xxx}+u_0u_x^3 \) can be written as a sum of  four cubes. The final result is 
\[ 6f_{xxx}=(x_1 + x_2)^3 + (x_1 - x_2)^3 - (x_1 + i x_3)^3 - (x_1 -i x_3)^3 \] 

Since \( f_{xxx}=x_1 (x_2 + i x_3)  (x_2 - i x_3) \) is completely reducible, other ways of expressing \( f_{xxx} \) as a sum of cubes are discussed in the following section.
 
 \end{example}

\section{Completely Reducible Forms}\label{sectreducible}

In this, and the following sections, we calculate the ranks of cubic forms in some special cases.

Suppose that  \(f_{xxx}\) is completely reducible, that is, \( f_{xxx}=a_xb_xc_x \) for some  forms \( a_x \), \( b_x \), \( c_x \). Then, extending \lref{lem22}\eqref{iiabc},
\begin{equation}\label{eq2117}
\begin{aligned}
\Delta_{xxx}&=[abc]^2\, a_xb_xc_x & 
S&=[abc] ^4 &
F_{6u}&=[abu]^2 [bcu]^2 [cau]^2\\
S_{uuu}&=[abc] [abu][bcu] [cau]&
T&=[abc] ^6 &
 \bF_{6u}&=[abc]^8 [abu]^2 [bcu]^2 [cau]^2
\end{aligned}
\end{equation}

\new If  \( S=0 \), then \( \Delta_{xxx}=0 \),  \( [abc]=0 \) and  so  \(\{a_x,b_x,c_x\}\) is dependent. The  rank of \( f_{xxx} \) is \( 1 \), \( 2 \) or \( 3 \) depending on the values of \( \theta_{uuxx} \) and \( F_{6u} \). These cases are discussed in \eref{ex49}, \mref{meth2} and \lref{lem31}. 

\new If \( S\neq 0 \), then \( [abc]\neq 0 \) so \( \{a_x,b_x,c_x\} \) is linearly independent, and, by \lref{lem37},  \(\rank[f_{xxx}]=4 \). Here's one way \( f_{xxx} \) can be expressed as a sum of four cubes:
\begin{equation}\label{eq128}
24 a_xb_xc_x =(a_x + b_x + c_x)^3 - (a_x + b_x - c_x)^3 - (b_x + c_x- a_x)^3 - (c_x + a_x - b_x)^3 
\end{equation}
\mref{meth5} provides infinitely many other ways of expressing \( f_{xxx} \) as a sum of four cubes. Indeed, one of the terms  could be \( u_x^3 \) where  \( u_x \) is any form whose coefficients \( \uu \) satisfy   \(S_{uuu}\bF_{6u}\neq 0\). 

Let's look for all such \( u_x \). 
Since \( \{a_x,b_x,c_x\} \) is linearly independent, \( u_x \) can  conveniently be written as  \( u_x =a_0a_x+b_0b_x+c_0c_x \) with \( a_0,b_0,c_0\in \C \). In this circumstance, using \eqref{eq2117}, 
\[ S_{uuu}=a_0b_0c_0 [abc]^4 \qquad \bF_{6u}=a_0^2b_0^2c_0^2 [abc]^{14}  \]  
If \( a_0b_0c_0 \neq 0 \), then \(S_{uuu}\bF_{6u}\) is nonzero and, by  \lref{lem37}, \( u_x^3 \) as a summand in an expression of \( f_{xxx} \) as a sum of four cubes. This new expression for \( f_{xxx} \) is easily obtained from  \eqref{eq128} by replacing \( a_x \) by \( a_0a_x \),  \( b_x \) by \( b_0b_x \), and \( c_x \) by \( c_0c_x \):
\begin{multline}\label{eq129}
24 a_0b_0c_0 a_xb_xc_x =(a_0a_x + b_0b_x + c_0 c_x)^3 - (a_0a_x + b_0b_x - c_0 c_x)^3 \\
- (b_0b_x + c_0 c_x- a_0a_x)^3 - (c_0 c_x + a_0a_x - b_0b_x)^3 
\end{multline}
 Since  \( a_0b_0c_0 \neq 0 \),  this equation can be solved for \( f_{xxx} \),  expressing \( f_{xxx} \) as a sum of four cubes in which the first term is the cube of \( a_0a_x+b_0b_x+c_0c_x \). 
 
\section{Conic and Secant Line}

Suppose that \( f_{xxx} \) is reducible but not completely reducible, that is, \(f_{xxx}= a_xb_{xx} \) where \( b_{xx} \) is an irreducible quadratic form and \( a_x \) is a linear form. Geometrically, the curve \( f_{xxx}=0 \) is the union of the conic \( b_{xx}=0 \) and the line \( a_x=0 \). We know already that, if the line \( a_x=0 \) is tangent to the conic \( b_{xx}=0 \), then \lref{lem34} applies and \( \rank[f_{xxx}]=5 \). 

In this section, we consider the remaining case where  \( a_x=0 \) is a secant line of the  conic \( b_{xx}=0 \). From Lemmas~\ref{lemx66} and ~\ref{lemx77}, \( f_{xxx} \) can be written as  \( f_{xxx}=a_x (a_x^2 + b_x c_x) \) for some  linearly independent  forms \( \{a_x,b_x,c_x\} \).  In this circumstance,  \( S=[abc]^4 \). Because \( \{a_x,b_x,c_x\} \) is independent, \( S \)  is nonzero and by \lref{lem37},   \( \rank[f_{xxx}]=4 \).

To express \( f_{xxx} \) as a sum of four cubes, following \mref{meth5}, we look for a form  \( u_x \) and \( u_0\in \C \) such that \( \rank[f_{xxx}-u_0 u_x^3]=3 \). Since \( \{a_x,b_x,c_x\} \) is linearly independent, \( u_x \) can  conveniently be written as  \( u_x =a_0a_x+b_0b_x+c_0c_x \) for some \( a_0,b_0,c_0\in \C \).  With this choice for \( u_x \) we get 
\[ S_{uuu}=a_0b_0c_0 [abc]^4 \qquad \bF_{6u}=b_0^2 c_0^2 (a_0^2 - 12 b_0 c_0 ) [abc]^{14}  \] 
To ensure that \( S_{uuu} \) and \( \bF_{6u} \) are both nonzero, we will assume that \( a_0 \), \( b_0 \), \( c_0 \) and \(  a_0^2 - 12 b_0 c_0 \) are all nonzero.  We set \( u_0=1/(24 a_0 b_0 c_0) \)  so that  \(  S - 24 u_0 S_{uuu} =0\), and then
\[ g_{xxx}=f_{xxx}-u_0 u_x^3=f_{xxx} - \frac1{24 a_0 b_0 c_0}(a_0a_x+b_0b_x+c_0c_x)^3  \]  Then \( \Vert S \Vert_{f\mapsto g} =0\) and   \( 8a_0^2 \Vert T \Vert_{f\mapsto g} =- [abc]^6 (a_0^2 - 12 b_0 c_0 )\). Since \( \Vert T \Vert_{f\mapsto g}  \) is nonzero,  \(g_{xxx} \) has  rank three. To express \( g_{xxx} \) as a sum of three cubes we need to factor the Hessian of \( g_{xxx} \):
\begin{align*}
&\Vert \Delta_{xxx}\Vert_{f\mapsto g} \\
&=\frac1{8 a_0 b_0 c_0} [abc]^2 (a_0 a_x - b_0 b_x - 
   c_0 c_x) \left((a_0^2 - 12 b_0 c_0 ) a_x^2 - (b_0 b_x - c_0 c_x)^2\right)\\
   &=\frac1{8 a_0 b_0 c_0}  [abc]^2 (a_0 a_x - b_0 b_x - 
   c_0 c_x) (\sigma a_x - (b_0 b_x - c_0 c_x)) (\sigma a_x + (b_0 b_x - c_0 c_x))
\end{align*}
where \( \sigma^2=a_0^2 - 12 b_0 c_0 \). Hence \( g_{xxx} \) is a linear combination of the cubes of the three linear factors of \( \Vert \Delta_{xxx}\Vert_{f\mapsto g}  \). By matching coefficients we can express \( g_{xxx} \) as a sum of three cubes and then \( f_{xxx} \) as a sum of four cubes. The final result for \( f_{xxx} \) is 
\begin{align*}
24 a_0 b_0 c_0 \sigma  f_{xxx} &=\sigma  (a_0 a_x + b_0 b_x + c_0 c_x)^3 + 
   \sigma  (a_0 a_x - b_0 b_x - c_0 c_x)^3\\
   &\qquad -  a_0 (\sigma  a_x - (b_0 b_x - c_0 c_x))^3 -  
   a_0 (\sigma  a_x + (b_0 b_x - c_0 c_x))^3
\end{align*}
This formula gives infinitely many ways of expressing \( f_{xxx} \) as a sum of four cubes. It holds so long as  \( a_0 b_0 c_0 \sigma \neq 0 \) and \( \sigma^2=a_0^2 - 12 b_0 c_0 \). If only one way is needed, we can avoid square roots by setting  \( a_0=4 \), \( b_0=c_0=1 \) and \( \sigma=2 \) to get 
\[ 96f_{xxx}=  (4 a_x + b_x + c_x)^3 +  (4 a_x - b_x - c_x)^3 -  2 (2 a_x - b_x + c_x)^3 - 2 (2 a_x + b_x - c_x)^3 \] 

\section{Hesse Normal Form}

Suppose that \( f_{xxx}= s (a_x^3 + b_x^3 + c_x^3) + t a_x b_x c_x\) for some  linearly independent  forms \( \{a_x,b_x,c_x\} \) and \( s,t\in \C \), not both zero. 
Then
\begin{align*}
S&=t (t^3 - 216 s^3)\,[abc]^4 =t ( t-6 s ) (t^2+ 6 s t+36 s^2 ) [abc]^4 \\
T&= (t^6 + 54_0 s^3 t^3 - 5832 s^6)\, [abc]^6
\end{align*}
Since \( [abc]\neq 0 \),  it is not possible for \( S \) and \( T \) to be simultaneously zero. So, from the table, the  rank of \( f_{xxx} \) is \( 3 \) or \( 4 \) depending on whether \( S \) is zero or not.

In view of the symmetry in this problem, we apply \mref{meth5} with the naive assumption that \( u_x=a_x+b_x+c_x \). With \( u_x \) fixed this way we get
\[ S_{uuu}=(6 s - t) (3 s + t)^2\,[abc]^4\qquad 
\bF_{6u}= (6 s - t)^8 (3 s + t)^4\,[abc]^{14}
 \]
We don't expect our method to work unless both \( S_{uuu} \) and \( \bF_{6u} \) are nonzero, so we assume that \( (6 s - t) (3 s + t)\neq 0 \). Next we solve \(  S - 24 u_0 S_{uuu} =0\) to get 
\[ u_0=\frac{t \left(36 s^2+6 s t+t^2\right)}{24 (3 s+t)^2} \]
Setting \( g_{xxx}=f_{xxx}-u_0u_x^3 \) we find 
\begin{align*}
24 (3 s + t)^2 g_{xxx}&= (216 s^3 + 108 s^2 t + 18 s t^2 - t^3) (a_x^3 + b_x^3 + c_x^3) \\
&\quad - 3 t (36 s^2 + 6 s t + t^2) (a_x^2 b_x + a_x b_x^2 + a_x c_x^2 + c_x a_x^2 + b_x c_x^2 + c_x b_x^2) \\
&\quad+ 18 t^2 (6 s + t) a_x b_x c_x
\end{align*}
As expected, the Hessian of \( g_{xxx} \) factors completely:
\[ 8 (3 s + t)^2\Vert \Delta_{xxx}\Vert_{f\mapsto g} =[abc]^2 (6 s - t)^2 A_xB_xC_x\]
where
\begin{gather*}
A_x=(6 s + t) a_x - t (b_x + c_x) \qquad 
B_x=(6 s + t) b_x - t (c_x + a_x)\\
C_x=(6 s + t) c_x - t (a_x + b_x)
\end{gather*}
We also expect that \( f_{xxx} \) is a linear combination of \( A_x^3 \), \( B_x^3 \), \( C_x^3 \) and \( u_x^3 \) with scalars that can be determined by matching coefficients. After some computation, the final result is the identity
\begin{equation}\label{eq126}
24 (3 s + t)^2 f_{xxx}=  A_x^3 + B_x^3 +C_x^3 +t (36 s^2 + 6 s t + t^2) u_x^3.
\end{equation}
Even though this identity was derived assuming that \( (6 s - t) (3 s + t)\neq 0 \), it is valid independent of that assumption and is useful so long as \( 3 s + t\neq 0 \).

With \eqref{eq126} established, we start over again and consider the  cases \( S\neq 0 \) and \( S=0 \).

\new Suppose that \( S\neq 0 \) and hence \( \rank f_{xxx}=4 \). We have two cases:
\begin{itemize}
\item If \( 3 s + t \) is nonzero, then \eqref{eq126} can be solved for \( f_{xxx} \) expressing it as a sum of four cubes. Note that the coefficient of \( u_x^3 \) is nonzero since \( S\neq 0 \).

\item If  \( 3 s + t =0\), then \( S=9 [abc]^4  t^4\neq 0\) and \( f_{xxx} \) is completely reducible:
\[ f_{xxx}= s(a_x + b_x + c_x) (a_x + \omega b_x + \omega^2 c_x)(a_x + \omega^2 b_x +  \omega c_x) \]
 where \( \omega =e^{2\pi i/3}\). So \eqref{eq128} can be used to express \( f_{xxx} \) as a sum of four cubes:
\begin{multline*}
\qquad\qquad 24 f_{xxx} =s \Big(27  a_x^3 - (a_x - 2 b_x - 2 c_x)^3 
- (a_x - 2 \omega b_x -  2 \omega^2 c_x)^3\\ - (a_x - 2 \omega^2 b_x - 2 \omega c_x)^3\Big)
\end{multline*}
\end{itemize}
 
\new Suppose that \( S=0 \) and hence \( \rank f_{xxx}=3 \). Since \( t ( t-6 s ) (t^2+ 6 s t+36 s^2 )=0 \), we have two cases.

\begin{itemize}
\item If \( t (t^2+ 6 s t+36 s^2 ) =0\), then  the last term in \eqref{eq126} is zero, and since \( 3 s + t \) cannot be zero (or else \( s=t=0 \)),  \eqref{eq126} can be used to express \( f_{xxx} \) as a sum of three cubes.

\item The only remaining case is \( 6s-t=0 \). Here \( \Delta_{xxx} \) is completely reducible:
\[ \qquad \Delta_{xxx} =-
 108 [abc]^2 s^3 ( a_x + b_x + c_x) ( a_x + \omega b_x + \omega^2 c_x) ( 
   a_x + \omega^2 b_x + \omega c_x) \]
 As explained in \mref{meth4}, the linear factors of \( \Delta_{xxx} \) occur in the expansion of \( f_{xxx} \) as a sum of three cubes: 
\[ \qquad 3 f_{xxx} = s \Big((a_x + b_x + c_x)^3 + (a_x + \omega b_x + \omega^2 c_x)^3 + (a_x + \omega^2 b_x +  \omega c_x)^3\Big) \] 

\end{itemize}

Notice that \eqref{eq126} can be used to express \( f_{xxx} \) as a sum of cubes  except when \( 3s+t=0 \) or \( 6s-t=0 \).  In these special cases,  the appearance of \( \omega \) in such expressions  is unavoidable.

\section{Weierstrass Form}
Suppose that \( f_{xxx}=x_2^2 x_3 - ( x_1^3 + p x_1 x_3^2 + q x_3^3)\) for some \(p,q\in \C\). Then \(S=-48p\) and \(T=864 q\).  If \(p\neq 0\), then \(S\neq0 \) and the  rank of \(f_{xxx}\) is four. To express \(f_{xxx}\) as a sum of four cubes using \mref{meth5} we look for \(\uu\in \C\) so that 
\[ S_{uuu}=2 p u_1^3 + 18 q u_1 u_2^2 - 12 p u_2^2 u_3 + 6 u_1 u_3^2 \] 
and  \(\bF_{6u} \) are nonzero. 

The obvious choice to make \( S_{uuu} \) nonzero is \(u_1=1\), \(u_2=u_3=0\) since then \( S_{uuu}=2p\neq 0 \).  Unfortunately, this choice of \( \uu \) gives  \(\bF_{6u} =0\), so \mref{meth5} cannot be used to express \( f_{xxx}  \) as a sum of four cubes.

 It is worth noticing that if we make this ``bad'' choice for \( \uu \), then \( u_0=-1 \) so that \(g_{xxx}=f_{xxx}-u_0 u_x^3= x_3 (x_2^2 - (p x_1  +q x_3)x_3) \) satisfies \( \Vert S \Vert_{f\mapsto g} =0\). Unfortunately,   \( \Vert T \Vert_{f\mapsto g} =0\), so \lref{lem36} does not apply, and there is no certainty that \( g_{xxx} \) can be written as a sum of three cubes. Indeed, \( g_{xxx} \) has the form of \lref{lem34}(3), so \( \rank[g_{xxx}]=5 \). This illustrates why the condition \( \bF_{6u}\neq 0 \) is necessary in the proof of \lref{lem37}. 

Starting over again,  we fix \( u_x=x_2+\tau x_3  \)  (that is, \(u_1 = 0\),  \(u_2 =1\)  and \(u_3 = \tau \)),  with \( \tau^2+3q=0 \), because then \(S_{uuu}= -12 p \tau\) and  \(\bF_{6u} =27648 p^5\). These quantities are both nonzero so long as we add the extra assumption that \( q \) is nonzero. 

We set \( g_{xxx}=6 \tau f_{xxx} -  u_x^3 \) so that  \( \Vert S \Vert_{f\mapsto g} =0\) and  \( \Vert T \Vert_{f\mapsto g} =-6^9 p^3 q^2\neq 0\). By \lref{lem37}, \( \rank[g_{xxx}]=3 \). The Hessian of \( g_{xxx} \) is  \[ \Vert \Delta_{xxx}\Vert_{f\mapsto g}=-6^4 p q (x_2 - \tau x_3) (p x_3^2 - 3 x_1^2)= 432  p q (x_2 - \tau x_3) (3 x_1 + \sigma x_3) (3 x_1 - \sigma x_3) \] where \( \sigma^2+3p=0 \). By matching coefficients, it is easy to express \( g_{xxx} \) as a sum of the cubes of the factors of its Hessian:
\[ 9 g_{xxx} = -9 (x_2 - \tau x_3)^3 - \tau  (3 x_1 + \sigma x_3)^3 - 
   \tau  (3 x_1 - \sigma x_3)^3 \] 
 Using \(6\tau  f_{xxx}=g_{xxx}+ u_x^3 \), we can express \( f_{xxx} \) as a sum of four cubes: 
  \begin{equation}\label{eq51}
   54 \tau f_{xxx} =9 (x_2 + \tau x_3)^3 - 9 (x_2 - \tau x_3)^3 - 
   \tau  (3 x_1 + \sigma x_3)^3 - \tau  (3 x_1 - \sigma x_3)^3
 \end{equation}
 
\new In retrospect, we notice that this equation follows from the  two equations
\begin{align}
6 \tau x_3 (x_2^2 - q x_3^2)&=(x_2 + \tau x_3)^3 - (x_2 - \tau x_3)^3  \label{eq55}\\
54 x_1 (x_1^2 + p x_3^2)&=(3 x_1 + \sigma x_3)^3 + (3 x_1 - \sigma x_3)^3 \label{eq56}
\end{align}
(compare \eref{ex36}) and that \( f_{xxx} \) is the sum of two terms 
\begin{equation}\label{eq57}
f_{xxx}=x_3 (x_2^2 - q x_3^2) - x_1 (x_1^2 + p x_3^2)
\end{equation}
each having  rank two. 

\new So far we have assumed that \( p \) and \( q \) are nonzero. What about the other cases? The case  \( p=0 \) and \( q\neq 0 \) is discussed in \eref{ex39}. We notice that, in this case,  \eqref{eq55} and \eqref{eq57} can be combined to express \( f_{xxx} \) as a sum of three cubes, exactly as in \eqref{eq1314}.

If \( p\neq 0 \) and \( q= 0 \), then \( S\neq 0\) and \( T= 0 \), so \( \rank[f_{xxx}]=4 \). The above argument does not apply because it assumed \( q\neq 0 \). We could start again with a different choice for \( \uu \), but instead, motivated by \eqref{eq57}, we seek to write \( f_{xxx}=g_{xxx}-h_{xxx} \) with \( \rank[g_{xxx}]=\rank[h_{xxx}]=2 \). We have 
\begin{equation}
f_{xxx}=x_3 x_2^2  - x_1 (x_1^2 + p x_3^2).
\end{equation}
Choosing \( g_{xxx}=x_3 x_2^2 \) and \( h_{xxx}= x_1 (x_1^2 + p x_3^2) \) will not help because,  by \lref{lem32}, \( \rank[g_{xxx}]=3 \). Instead we choose \( g_{xxx}=x_3 x_2^2 + a x_3^3\) and \( h_{xxx}= x_1 (x_1^2 + p x_3^2)+a x_3^3\) with \( a\in \C \) to be determined. This choice means that each of \( g_{xxx} \) and \( h_{xxx} \) are binary forms. As a consequence their Hessians are zero \cite[Lemma 8.5]{brookfield2021} which,  by \lref{lem33},  is a necessary condition for these forms to have rank two. 

The \( F_{6u} \) concomitants for  \( g_{xxx} \) and \( h_{xxx} \) are \( -4 a u_1^6 \) and  \( -(27 a^2 + 4 p^3) u_2^6 \) respectively. By \lref{lem33}, if \( a\neq 0 \) and \( 27 a^2 + 4 p^3\neq0 \), then  \( g_{xxx} \) and \( h_{xxx} \) will have  rank two. Following \mref{meth23}, the factorization of the  \( \theta_{uuxx} \) concomitant of \( g_{xxx} \), namely \(  \Vert \theta_{xxuu}\Vert_{f\mapsto g} \), can be used to express \( g_{xxx} \) as a sum of two cubes:
\begin{align*}
 \Vert \theta_{xxuu}\Vert_{f\mapsto g} &= -4 u_1^2  (x_2 + \mu x_3)(x_2 - \mu x_3)\\
6 \mu g_{xxx} &= (x_2 + \mu x_3)^3-(x_2 - \mu x_3)^3 
\end{align*}
where \( \mu^2= 3 a\neq 0 \). Similarly, the factorization of  \(  \Vert \theta_{xxuu}\Vert_{f\mapsto h} \), can be used to express \( h_{xxx} \) as a sum of two cubes:
\begin{align*}
3 p \Vert \theta_{xxuu}\Vert_{f\mapsto h} &= 
 u_2^2 \big(6 p x_1 + (9 a  - \nu ) x_3\big) \big(6 p x_1 + (9 a + \nu ) x_3\big)\\
 432   p^3 \nu \,  h_{xxx} &= ( 
      \nu +9 a ) \big(6 p x_1 + (9 a  - \nu  )x_3\big)^3 + (
      \nu-9 a  ) \big(6 p x_1 + (9 a + \nu ) x_3\big)^3
\end{align*}
where \(\nu^2= 3 (27 a^2 + 4 p^3 ) \neq 0\). Using these expressions for \( g_{xxx} \) and \( h_{xxx} \), \( f_{xxx}=g_{xxx}-h_{xxx} \) can be expressed as a sum of four cubes, as claimed: 
\begin{multline*}
f_{xxx}= \frac{1}{6 \mu} (x_2 + \mu x_3)^3 
- \frac{1}{6 \mu} (x_2 - \mu x_3)^3 \\
- \frac{ \nu+9 a }{432   p^3 \nu} (6 p x_1 + (9 a - \nu )x_3)^3
 - \frac{ \nu -9 a}{432   p^3 \nu} (6 p x_1 + (9 a  + \nu) x_3)^3
\end{multline*}

\end{document}